\newtheorem{theorem}{Theorem}[section]
\newtheorem{lemma}[theorem]{Lemma}
\newtheorem{proposition}{Proposition}
\newtheorem*{problem}{Problem}
\newtheoremstyle{myremark}
{6pt}
{6pt}
{\rmfamily}
{}
{\bfseries}
{.}
{.5em}
{}
\newtheorem{definition}[theorem]{Definition}
\newtheorem{remark}{Remark}
\newtheorem{example}{Example}
\DeclareMathOperator*{\co}{co}
\newcommand{\setA}{\mathscr{A}}
\newcommand{\setB}{\mathscr{B}}
\newcommand{\setI}{\mathscr{I}}
\newcommand{\setH}{\mathscr{H}}
\newcommand{\setM}{\mathscr{M}}
\newcommand{\systemA}{\boldsymbol{A}}
\newcommand{\systemAB}{\boldsymbol{AB}}
\newcommand{\plantA}{\mathbscr{A}} 
\newcommand{\controllerB}{\mathbscr{B}}
\title{Minimax joint spectral radius and stabilizability of discrete-time linear switching control systems}
\author{Victor Kozyakin\thanks{Kharkevich Institute for Information Transmission Problems, Russian Academy of Sciences, Bolshoj Karetny lane 19, Moscow 127051, Russia, e-mail: kozyakin@iitp.ru\newline\indent Kotel'nikov Institute of Radio-engineering and Electronics, Russian Academy of Sciences, Mokhovaya 11-7, Moscow 125009, Russia}}
\date{}
\begin{document}
\maketitle

\begin{abstract}
To estimate the growth rate of matrix products $A_{n}\cdots A_{1}$ with factors from some set of matrices $\setA$, such numeric quantities as the joint spectral radius $\rho(\setA)$ and the lower spectral radius $\Check{\rho}(\setA)$ are traditionally used. The first of these quantities characterizes the maximum growth rate of the norms of the corresponding products, while the second one characterizes the minimal growth rate. In the theory of discrete-time linear switching systems, the inequality $\rho(\setA)<1$ serves as a criterion for the stability of a system, and the inequality $\Check{\rho}(\setA)<1 $ as a criterion for stabilizability.

For matrix products $A_{n}B_{n}\cdots A_{1}B_{1}$ with factors $A_{i}\in\setA$ and $B_{i}\in\setB$, where $\setA$ and $\setB$ are some sets of matrices, we introduce the quantities $\mu(\setA,\setB)$ and $\eta(\setA,\setB)$, called the lower and upper minimax joint spectral radius of the pair $\{\setA,\setB\}$, respectively, which characterize the maximum growth rate of the matrix products $A_{n}B_{n}\cdots A_{1}B_{1}$ over all sets of matrices $A_{i}\in\setA$ and the minimal growth rate over all sets of matrices $B_{i}\in\setB$. In this sense, the minimax joint spectral radii can be considered as generalizations of both the joint and lower spectral radii. As an application of the minimax joint spectral radii, it is shown how these quantities can be used to analyze the stabilizability of discrete-time linear switching control systems in the presence of uncontrolled external disturbances of the plant.
\medskip

\noindent\textbf{Keywords:} minimax, joint spectral radius, stabilizability, switching systems, discrete-time systems
\medskip

\noindent\textbf{AMS Subject Classification:} 40A20; 93D15; 94C10; 93505; 93C55
\end{abstract}

\section{Introduction}

Various applied and theoretical problems of computational mathematics, control theory, coding theory, combinatorics, etc. lead to the necessity to know the growth/decrease rate of products of $(N\times N)$-matrices $A_{n}\cdots A_{1}$ with factors from some set of matrices $\setA$, see, e.g.,~\cite{Jungers:09,Theys:PhD05}, and also the bibliography in~\cite{Koz:IITP13}. To estimate the growth rate of the corresponding matrix products such numeric characteristics of the set of matrices $\setA$ are conventionally used as the joint spectral radius~\cite{RotaStr:IM60}
\begin{equation}\label{E-JSR}
\rho(\setA)= \lim_{n\to\infty}\sup\left\{\|A_{n}\cdots A_{1}\|^{\frac{1}{n}}:~A_{i}\in\setA\right\}
\end{equation}
and the lower spectral radius~\cite{Gurv:LAA95}
\begin{equation}\label{E-LSR}
\Check{\rho}(\setA)= \lim_{n\to\infty}\inf\left\{\|A_{n}\cdots A_{1}\|^{\frac{1}{n}}:~A_{i}\in\setA\right\},
\end{equation}
also called the joint spectral subradius. The limits in~\eqref{E-JSR} and~\eqref{E-LSR} always exist and do not depend on the norm $\|\cdot\|$ on the space of matrices of dimension $N\times N$; the corresponding proofs with historical comments can be found, e.g., in~\cite{Jungers:09,Theys:PhD05}.

The concepts of the joint and lower spectral radius arose in the second half of the 20th century and to date several hundred publications have been devoted to their investigation, see, e.g., the bibliography in~\cite{Jungers:09,Koz:IITP13}. One of the areas in which the application of joint and lower spectral radii is most natural and productive is the theory of linear switching systems with discrete time. In particular, the inequality $\rho(\setA)<1$ turns out to be a stability criterion for a discrete-time linear switching system, and the inequality $\Check{\rho}(\setA)<1 $ is a criterion for stabilizability.

Despite the fact that the joint and lower spectral radii are determined by `almost identical' equalities~\eqref{E-JSR} and~\eqref{E-LSR}, their properties vary significantly. It suffices to mention only the fact that the joint spectral radius $\rho(\setA)$ in the natural sense depends continuously on the set $\setA$, while the lower spectral radius $\Check{\rho}(\setA)$ in general is not a continuous function of $\setA$, see strict formulations, e.g., in~\cite{BochiMor:PLMS15,Jungers:LAA12}. Moreover, a number of properties of the joint and lower spectral radii, which in the final formulation look the same, are proved with the help of completely different approaches.

What has been said above provokes a natural desire, in the author's opinion, to introduce a certain characteristic of matrix products, which would unite the concepts of both joint and lower spectral radii. To realize this idea, we consider matrix products $A_{n}B_{n}\cdots A_{1}B_{1}$ with factors $A_{i}\in\setA$, $B_{i}\in\setB$ from two different sets of matrices $\setA$ and $\setB$ for which the quantities
\begin{align*}
\mu(\setA,\setB)= \lim_{n\to\infty}\max_{A_{i}\in\setA}\min_{B_{i}\in\setB} \|A_{n}B_{n}\cdots A_{1}B_{1}\|^{\frac{1}{n}},\\
\eta(\setA,\setB)= \lim_{n\to\infty} \min_{B_{i}\in\setB}\max_{A_{i}\in\setA} \|A_{n}B_{n}\cdots A_{1}B_{1}\|^{\frac{1}{n}},
\end{align*}
are defined, which we call further the lower and upper minimax joint spectral radius of the pair $\{\setA,\setB\}$. Both the quantities $\mu(\setA,\setB)$ and $\eta(\setA,\setB)$ characterize the maximum growth rate of the matrix products $A_{n}B_{n}\cdots A_{1}B_{1}$ over all sets of matrices  $A_{i}\in\setA$ and the minimal growth rate over all sets of matrices $B_{i}\in\setB$.

Outline the content of the work. This section presents the research motivation. In Section~\ref{S:classic}, the basic facts of the theory of joint/lower spectral radius are recalled and the relationship of these concepts to the problems of stability and stabilizability of discrete-time linear switching systems in which there is no control over the parameters of the system is explained. In Section~\ref{S:MJSR-stabilize}, the question is discussed of how the problem of the stabilizability of linear switching systems with discrete time changes in the case when a controller is added to such systems that allows controlling the parameters of the system. In Section~\ref{S:main}, the principal concepts in this paper of the lower and upper minimax joint spectral radii for two sets of matrices $\setA$ and $\setB$ are introduced. Here it is also shown (Theorems~\ref{T:pointwizestab} and~\ref{T:strongstab}) that the inequalities $\mu(\setA,\setB)<1$ and $\eta(\setA,\setB)<1$ are the criteria for various variants of stabilization of control systems in the presence of uncontrollable external disturbances of the plant. Section~\ref{S:MMprop} is devoted to a more detailed analysis of some properties of the minimax joint spectral radii. In particular, in it one class of matrices $\setA$ and $\setB$ is described, for which the lower and upper minimax joint spectral radii (and also some other similar quantities) coincide (Theorem~\ref{T:Hsets}). In Section~\ref{S:questions}, a brief discussion of the results is conducted and some open questions are formulated. Finally, Section~\ref{S:aux} contains proofs of all the statements of the paper.

\section{Stability/stabilizability of uncontrolled linear switching systems}\label{S:classic}

In this section, we recall the control-theoretic motivation for attracting the concepts of joint and lower spectral radius for the analysis of the problem of stability and stabilizability of (uncontrolled) linear switching systems.

Let us consider the discrete-time switching dynamical system $\systemA$ with delay in feedback, consisting of the plant $\plantA$, shown in Fig.~\ref{F:1}, whose output is additively affected by the external perturbation $f$.

\begin{figure}[htbp!]
\centering
 \tikzstyle{block}=[draw,fill=black!15,rectangle,minimum height=2em,minimum width=2.5em]
 \tikzstyle{sum}=[draw,circle,inner sep=0pt,minimum size=0.75em]
 \tikzstyle{dot}=[draw,fill=black,circle,inner sep=0pt,minimum size=0.1em]
 \tikzstyle{empty}=[coordinate]
 \begin{tikzpicture}[auto, node distance=3em and 2em,node font=\small,>=latex']
 \node [empty, name=Input] {};
 \node [sum, below of = Input] (Sum) {\tiny$+$};
 \node [block, left = of Sum] (Plant) {$\plantA$};
 \node [block, below of = Plant] (Delay) {$\boldsymbol{z^{-1}}$};
 \node [dot, right = of Sum] (Dot) {};
 \node [empty, right = of Dot] (Dummy0) {};
 \node [empty, below of = Dot] (Dummy1) {};
 \node [empty, right = of Dummy0,xshift=-0.8em] (Output) {};
 \node [empty, left = of Plant,xshift=-3em] (Dummy2) {};
 \draw [->] (Input)+(0,1em) -- (Input) node[right,yshift=-0.5em]{$f(n)$} -- (Sum);
 \draw [->] (Plant) -- (Sum);
 \draw [->] (Sum) -- (Dot) -- node {$x(n)$} (Dummy0) -- (Output);
 \draw [->] (Dot) |- (Delay);
 \draw [->] (Delay) -|  (Dummy2) -- node {$x(n-1)$} (Plant);
 \node[draw,dashed,inner sep=0.5em,label={[shift={(-1.5em,-1.5em)}]north east:$\systemA$},fit= (Dummy0) (Delay) (Dummy2) (Input)] {};
\end{tikzpicture}
\caption{Discrete-time linear switching system}\label{F:1}
\end{figure}

We assume that for each time instance $n=1,2,\ldots$ the output $x_{\text{out}}$ and the input $x_{\text{in}}$ of the plant $\plantA$ are connected by the linear equation
\begin{equation}\label{E:plantA}
  x_{\text{out}}=A_{n}x_{\text{in}},\qquad x_{\text{in}},x_{\text{out}}\in\mathbb{R}^{N},
\end{equation}
where $A_{n}$ is a matrix of dimension $N\times{}N$ that takes values in some finite set of matrices $\setA$.

The sequence of matrices $\{A_{n}\}$, depending on the context, can either be determined by external disturbances, or formed in some way in order to give the whole system some properties. The function $f(n)$ represents additive external actions on the state vector $x$ of the system. The block $\boldsymbol{z^{-1}}$ is the delay element per unit of time (one clock cycle). In this case the dynamics of the system under consideration is described by the inhomogeneous equation
\[
x(n)=A_{n}x(n-1)+f(n),\qquad n=1,2\ldots\,,
\]
in which the variables $x(n)$ and $f(n)$ are assumed to be column-vectors of dimension~$N$.

In the case when there are no additive external actions of $f$, that is, $f(n)\equiv0$, the dynamics of the system is described by the homogeneous equation
\begin{equation}\label{E:mainA}
  x(n)=A_{n}x(n-1),\qquad n=1,2,\ldots\,.
\end{equation}

\begin{definition}\label{D:asystab}
A system $\systemA$ with zero input $f$, governed by equation~\eqref{E:mainA}, is called \emph{asymptotically stable} in the class of all matrices $\setA$ if
\begin{equation}\label{E:Antoinfty}
x(n)=A_{n}\cdots A_{1}x(0)\to 0\quad\text{as}\quad n\to\infty
\end{equation}
for any sequence of matrices $\{A_{n}\in\setA\}$ and any initial condition $x(0)$.
\end{definition}

As is known~\cite{BerWang:LAA92,DaubLag:LAA92,Gurv:LAA95,Koz:AiT90:10:e}, the convergence to zero of all solutions of equation~\eqref{E:mainA} in a class of matrices $\setA$ implies stronger property of exponential convergence to zero of each sequence $\{X_{n}\}$ of matrix products $X_{n}=A_{n}\cdots A_{1}$, i.e. the existence of constants $C>0$ and $\lambda\in(0,1)$ (independent of the matrix factors $A_{1},\ldots, A_{n}$) such that $\|A_{n}\cdots A_{1}\|\le C \lambda^{n}$, where $\|\cdot\|$ is some norm on the space of $(N\times N)$-matrices. The latter property in turn implies the fulfillment of the inequality $\rho(\setA)<1$, where $\rho(\setA)$ is the joint spectral radius of the set of matrices $\setA$ determined by equality~\eqref{E-JSR}. On the other hand, the fulfillment of the inequality $\rho(\setA)<1$  in an obvious way implies the convergence to zero of each sequence of matrices $X_{n}=A_{n}\cdots A_{1}$ with cofactors from $\setA$. The following assertion follows from this.

\begin{proposition}\label{P:1}
A system $\systemA$ with zero input $f$, governed by equation~\eqref{E:mainA}, is asymptotically stable in the class of matrices  $\setA$ if and only if $\rho(\setA)<1$.
\end{proposition}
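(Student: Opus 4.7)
The plan is to prove the two implications separately, leaning heavily on the chain of equivalences that is already sketched in the paragraph immediately preceding the statement. Both directions reduce to a clean interplay between pointwise convergence of the orbits, a uniform exponential decay bound on products, and the definition of the joint spectral radius in~\eqref{E-JSR}.

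For the easy direction, assume $\rho(\setA)<1$ and pick any $\lambda$ with $\rho(\setA)<\lambda<1$. By the very definition~\eqref{E-JSR}, there exists $n_0$ such that for every $n\ge n_0$ and every choice of $A_1,\dots,A_n\in\setA$ we have $\|A_n\cdots A_1\|^{1/n}\le\lambda$, hence $\|A_n\cdots A_1\|\le\lambda^{n}$. Multiplying by $\|x(0)\|$ and letting $n\to\infty$ immediately gives~\eqref{E:Antoinfty}, which is asymptotic stability in the sense of Definition~\ref{D:asystab}. This direction uses no external input beyond the definition of $\rho(\setA)$.

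For the converse, suppose $\systemA$ is asymptotically stable. The key nontrivial input is the cited equivalence (Berger--Wang, Daubechies--Lagarias, Gurvits) that pointwise convergence $A_n\cdots A_1 x(0)\to 0$ for every sequence $\{A_n\}\subset\setA$ and every $x(0)$ upgrades automatically to \emph{uniform} exponential decay: there exist $C>0$ and $\lambda\in(0,1)$, independent of the factors, with $\|A_n\cdots A_1\|\le C\lambda^{n}$. Taking $n$-th roots, $\|A_n\cdots A_1\|^{1/n}\le C^{1/n}\lambda$, so taking the supremum over all admissible factors and then the limit as $n\to\infty$ in~\eqref{E-JSR} yields $\rho(\setA)\le\lambda<1$.

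The main obstacle is, as usual, the implication \emph{pointwise stability implies uniform exponential stability}: one must rule out the possibility of slower-than-exponential or non-uniform decay along cleverly chosen switching sequences, and this is precisely where the classical finiteness argument (or Berger--Wang-type reasoning) is needed. Since the statement is invoked as a standard fact of the theory with explicit references, my plan is to cite it rather than reprove it, and then glue the two halves above into a short, self-contained proof of Proposition~\ref{P:1}.
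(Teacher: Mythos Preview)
Your proposal is correct and follows essentially the same route as the paper: the paragraph preceding Proposition~\ref{P:1} \emph{is} the paper's proof, and it argues exactly as you do---citing the classical result that pointwise convergence of all products to zero forces a uniform exponential bound $\|A_{n}\cdots A_{1}\|\le C\lambda^{n}$, hence $\rho(\setA)<1$, while the converse follows directly from the definition~\eqref{E-JSR}. Your write-up is slightly more explicit (choosing an intermediate $\lambda$ and taking $n$-th roots), but there is no substantive difference.
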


\begin{definition}\label{D:pw-stab}
A system $\systemA$ with zero input $f$ is called \emph{pointwise stabilizable} in the class of all matrices $\setA$ if for each initial condition $x(0)$ there is a sequence of matrices $\{A_{n}\in\setA\}$, for which the convergence~\eqref{E:Antoinfty} holds.
\end{definition}

\begin{definition}\label{D:unistab}
A system $\systemA$  with zero input $f$ is called \emph{uniformly stabilizable} or simply \emph{stabilizable} in the class of all matrices $\setA$ if there exists a sequence of matrices $\{A_{n}\in\setA\}$ such that the convergence~\eqref{E:Antoinfty} holds for each initial condition $x(0)$.
\end{definition}

Obviously, the uniform stabilizability of a system $\systemA$ with zero input $f$ is equivalent to the existence of a sequence of matrices $\{A_{n}\in\setA\}$, such that the matrix products $A_{n}\cdots A_{1}$ are normwise convergent:
\[
\|A_{n}\cdots A_{1}\|\to 0\quad\text{as}\quad n\to\infty.
\]

In the literature, various terminology is used to denote concepts equivalent to pointwise or uniformly stabilizability. For example, in a number of works, instead of the term stabilizability, there is used a broader term controllability that goes back to R.~Kalman, see, e.g.,~\cite{JunMas:SIAMJCO17,LinAnt:IEEETAC09,SunGe05}.
In~\cite{SU:SIAMJMAA94}, for the concepts of pointwise or uniform stabilizability, the terms pointwise or uniform convergence of matrix products with matrices from $\setA$ are used. Uniform convergence of matrices implies their pointwise convergence, while the converse is not true.

\begin{example}[\cite{Stanford:SIAMJCO79,SU:SIAMJMAA94}]\label{Ex:1}
The products of matrices from the set
\[
\setA = \left\{\begin{bmatrix}
  \frac{1}{2}&0\\0&\vphantom{\frac{1}{2}}2
\end{bmatrix},~
\begin{bmatrix}
  \frac{\sqrt{3}}{2}&\frac{1}{2}\\-\frac{1}{2}&\frac{\sqrt{3}}{2}
\end{bmatrix}\right\},
\]
converge pointwise, but are not convergent uniformly.
\end{example}

To characterize the stabilizability, it is convenient to use the lower spectral radius $\Check{\rho}(\setA) $, defined by equality~\eqref{E-LSR}. In particular, the following assertion holds.

\begin{proposition}\label{P:2}
A system $\systemA$ with zero input $f$, governed by equation~\eqref{E:mainA}, is uniformly stabilizable if and only if $\Check{\rho}(\setA)<1$.
\end{proposition}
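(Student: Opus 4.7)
The plan is to establish each direction via a block-periodic construction combined with submultiplicativity of the norm.

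For the ``only if'' direction, I would start with a stabilizing sequence $\{A_n\}$ and exploit the assumption $\|A_n\cdots A_1\|\to 0$. First I would pick an index $m$ large enough that $c:=\|A_m\cdots A_1\|<1$, and set $P:=A_m\cdots A_1$. Then, by replicating the \emph{finite} block $A_1,\ldots,A_m$ periodically, I obtain admissible products of length $km$ whose norm is bounded by $\|P\|^k\le c^k$ using submultiplicativity. For a general length $n=km+r$ with $0\le r<m$, I pad with up to $m$ extra factors from $\setA$ and absorb the resulting multiplicative error into a constant $C=\max_{0\le r<m}\sup\{\|B_r\cdots B_1\|:B_i\in\setA\}$, which is finite because $\setA$ is finite (or, more generally, compact). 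Taking $n$-th roots gives $\|A_n\cdots A_1\|^{1/n}\le (C\cdot c^{k})^{1/n}\to c^{1/m}<1$, so the infimum in \eqref{E-LSR} is at most $c^{1/m}<1$, hence $\Check{\rho}(\setA)<1$.

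For the ``if'' direction, assume $\Check{\rho}(\setA)<1$ and fix $\lambda\in(\Check{\rho}(\setA),1)$. By the definition of the lower spectral radius in \eqref{E-LSR}, there exist an integer $m$ and matrices $A_1,\ldots,A_m\in\setA$ with $\|A_m\cdots A_1\|^{1/m}<\lambda$, i.e. $\|P\|<\lambda^m<1$ for $P:=A_m\cdots A_1$. I would then define the stabilizing sequence by periodically repeating the block $A_1,\ldots,A_m$, i.e.\ $A_{km+j}:=A_j$ for $1\le j\le m$, $k\ge 0$. By submultiplicativity, the resulting products of length $km$ satisfy $\|P^k\|\le\|P\|^k\le\lambda^{km}$, and for intermediate lengths the same padding argument as above bounds the norm by a constant times $\lambda^{km}$. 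This yields $\|A_n\cdots A_1\|\to 0$, so the system is uniformly stabilizable in the sense of Definition~\ref{D:unistab}.

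The only step needing a little care is the passage between the infimum over all admissible products (which defines $\Check{\rho}$) and the existence of a \emph{single} sequence realizing exponential decay; the periodic block construction resolves this, and finiteness of $\setA$ guarantees the uniform bound $C$ on short products. Everything else is routine manipulation of $n$-th roots and submultiplicativity, so I do not anticipate a genuine obstacle.
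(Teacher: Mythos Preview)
Your argument is correct and is exactly the standard periodic-block reasoning that underlies this result; the paper itself does not spell out a proof but simply remarks that sufficiency follows directly from~\eqref{E-LSR} and defers necessity to~\cite{JunMas:SIAMJCO17,SU:SIAMJMAA94,SunGe05}, whose arguments are essentially what you wrote. One small cosmetic point: in the ``only if'' direction you reuse the letters $A_1,\ldots,A_n$ both for the given stabilizing sequence and for the periodically replicated one, and it would help to distinguish them; also, once the limit in~\eqref{E-LSR} is known to exist, the padding for non-multiples of $m$ is not strictly needed, since $\inf_{A_i}\|A_{km}\cdots A_1\|^{1/(km)}\le\|P\|^{1/m}$ along the subsequence $n=km$ already forces $\Check{\rho}(\setA)\le c^{1/m}<1$.
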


The sufficiency of the condition $\Check{\rho}(\setA)<1$ for stabilizability follows directly from formula~\eqref{E-LSR}. And as shown in~~\cite[Proposition~1]{JunMas:SIAMJCO17}, \cite{SU:SIAMJMAA94}, \cite[Theorem~3.9]{SunGe05}, the stabilizability of the system $\systemA$, governed by equation~\eqref{E:mainA}, implies the inequality $\Check{\rho}(\setA)<1$.

Thus, Propositions~\ref{P:1} and~\ref{P:2} demonstrate that the joint and lower spectral radii are convenient analytical tools for analyzing the stability and stabilizability of (uncontrolled) linear switching systems. Unfortunately, the calculation of both the joint and the lower spectral radius is a complex problem, and only in exceptional cases it is possible to describe the classes of matrices for which these characteristics can be calculated in an `explicit formula' form, see, e.g., the bibliography in~\cite{Jungers:09,Koz:IITP13}.

\section{Stabilizability of controlled linear switching systems}\label{S:MJSR-stabilize}

Let us turn to a more realistic discrete-time control system $\systemAB$, which includes not only the plant $\plantA$, but also the controller $\controllerB$, see Fig.~\ref{F:3}.

\begin{figure}[htbp!]
\centering
 \tikzstyle{block}=[draw,fill=black!15,rectangle,minimum height=2em,minimum width=2.5em]
 \tikzstyle{sum}=[draw,circle,inner sep=0pt,minimum size=0.75em]
 \tikzstyle{dot}=[draw,fill=black,circle,inner sep=0pt,minimum size=0.1em]
 \tikzstyle{empty}=[coordinate]
 \begin{tikzpicture}[auto, node distance=3em and 2em,node font=\small,>=latex']
 \node [empty, name=Input] {};
 \node [sum, below of = Input] (Sum) {\tiny$+$};
 \node [block, left = of Sum] (Plant) {$\plantA$};
 \node [block, left = of Plant,xshift=-3em] (Controller) {$\controllerB$};
 \node (MidPC) at ($(Plant)!0.5!(Controller)$) {};
 \node [block, below of = MidPC] (Delay) {$\boldsymbol{z^{-1}}$};
 \node [dot, right = of Sum] (Dot) {};
 \node [empty, below of = Dot] (Dummy1) {};
 \node [empty, right = of Dot] (Dummy0) {};
 \node [empty, right = of Dummy0,xshift=-0.8em] (Output) {};
 \node [empty, left = of Controller,xshift=-3em] (Dummy2) {};
 \node [empty, below of = Dummy2] (Dummy3) {};
 \draw [->] (Input)+(0,1em)  -- (Input) node[right,yshift=-0.5em] {$f(n)$} -- (Sum);
 \draw [->] (Plant) -- (Sum);
 \draw [->] (Sum) -- (Dot) -- node {$x(n)$} (Dummy0) -- (Output);
 \draw [->] (Dot) |- (Delay);
 \draw [->] (Delay) --  (Dummy3) --  (Dummy2) -- node {$x(n-1)$} (Controller);
 \draw [->] (Controller) -- node {$u(n-1)$} (Plant);
 \node[draw,dashed,inner sep=0.5em,label={[shift={(-2.5em,-1.5em)}]north east:$\systemAB$},fit= (Dummy0) (Delay) (Dummy2) (Input)] {};

\end{tikzpicture}
\caption{Control system consisting of plant~$\plantA$ and controller~$\controllerB$}\label{F:3}
\end{figure}

Concerning the plant $\plantA$, the same assumptions will be made as in the previous section, namely, we will assume that for each $n=1,2,\ldots$ the output $x_{\text{out}} $ of the plant $\plantA$ is linked with its input $x_{\text{in}}$ by the linear equation
\[
x_{\text{out}}=A_{n}x_{\text{in}},\qquad x_{\text{in}}\in\mathbb{R}^{M},~x_{\text{out}}\in\mathbb{R}^{N},
\]
where $A_{n}$ is an $(M\times{}N)$-matrix taking values in some finite set of matrices $\setA$. The difference from assumptions~\eqref{E:plantA} that were superimposed on the plant $\plantA$ in Section~\ref{S:classic} is that in this case the dimensions of the input and output of the plant $\plantA$ do not need to be the same.

The controller $\controllerB$ will also be assumed to be functioning for each $n=1,2,\ldots$ in accordance with the linear equation
\[
u_{\text{out}}=B_{n}u_{\text{in}},\qquad u_{\text{out}}\in\mathbb{R}^{M},~u_{\text{in}}\in\mathbb{R}^{N},
\]
in which the matrix $B_{n}$ of dimension $N\times{}M$ can be chosen from some finite set of matrices $\setB$ which can be treated as the set of all available controls.

In this context, the matrix sequence $\{A_{n}\}$ is determined by external (uncontrollable) disturbances of the plant, while the sequence of matrices $\{B_{n}\}$ represents the control actions of the controller, with which one can try to give desirable properties to the system under consideration. The function $f(n)$ represents additive input effects on the state vector of the system. The block $\boldsymbol{z^{-1}}$, as in Section~\ref{S:classic}, is the delay element per unit of time (one clock cycle). In this case, the dynamics of the system under consideration is governed by the equation
\[
x(n)=A_{n}B_{n}x(n-1)+f(n),\qquad n=1,2\ldots\,,
\]
in which $x(n)$ and $f(n)$ are column-vectors of dimension $N$.

Again, in order not to be distracted by nonessential details, we will only be interested in the stability and stabilizability of the zero solution of the system shown in Fig.~\ref{F:3}, in the case of zero input $f$, i.e. when $f(n)\equiv0$. The dynamics of such a system is governed by the equation
\begin{equation}\label{E:main}
x(n)=A_{n}B_{n}x(n-1),\qquad n=1,2\ldots\,.
\end{equation}

For a control system $\systemAB$ with zero input $f$, governed by equation~\eqref{E:main}, questions about (asymptotic) stability and stabilizability similar to those for $\systemA$ can be posed.

\begin{definition}\label{D:asystabAB}
A system $\systemAB$ with zero input $f$ is said to be \emph{asymptotically stable} in the class of all perturbations $\setA$ of the plant $\plantA$ and controls $\setB$ of the controller $\controllerB$ if
\begin{equation}\label{E:Antoinfty2}
x(n)=A_{n}B_{n}\cdots A_{1}B_{1}x(0)\to 0\quad\text{as}\quad n\to\infty,
\end{equation}
for any sequences of matrices $\{A_{n}\in\setA\}$,  $\{B_{n}\in\setB\}$ and any initial condition $x(0)$.
\end{definition}

We note, however, that the consideration of absolute stability for the system $\systemAB$ does not introduce anything new in comparison with the consideration of the system $\systemA$. Obviously, the system $\systemAB$ with zero input $f$, governed by equation~\eqref{E:main}, is asymptotically stable in the class of all perturbations of $\setA$ and controls $\setB$ if and only if the system $\systemA$ with zero input $f$ is asymptotically stable in the class of matrices
\[
\setA\setB:=\{AB:~A\in\setA,~B\in\setB\}.
\]
From this remark and Proposition~\ref{P:2} the following assertion follows.

\begin{theorem}\label{T:2}
A system $\systemAB$, governed by equation~\eqref{E:main}, is asymptotically stable in the class of all perturbations $\setA$ and controls $\setB$ if and only if $\rho(\setA\setB)<1$.
\end{theorem}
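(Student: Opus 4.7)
The plan is to reduce the two-matrix product situation to the single-matrix one already handled by Proposition~\ref{P:1}. Concretely, the idea is that every trajectory of $\systemAB$ under a sequence $\{A_n\in\setA\}$, $\{B_n\in\setB\}$ is also a trajectory of $\systemA$ driven by a sequence $\{C_n\in\setA\setB\}$ where $C_n:=A_nB_n$, and vice versa, so the two notions of asymptotic stability are literally the same property of the product family, just packaged differently.

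First I would show the direct inclusion: for any choice of $A_i\in\setA$ and $B_i\in\setB$, setting $C_i:=A_iB_i\in\setA\setB$ yields
\[
A_nB_n\cdots A_1B_1 = C_n\cdots C_1,
\]
so convergence~\eqref{E:Antoinfty2} for every admissible $\{A_n\},\{B_n\}$ and every $x(0)$ implies the analogous convergence~\eqref{E:Antoinfty} for every sequence $\{C_n\in\setA\setB\}$ and every $x(0)$. Next I would establish the converse: every $C\in\setA\setB$ by definition factors as $C=AB$ with $A\in\setA$, $B\in\setB$, so any sequence $\{C_n\in\setA\setB\}$ admits a (not necessarily unique) factorization $C_n=A_nB_n$, and hence convergence of all products $C_n\cdots C_1$ to zero, applied to the trajectories of the chosen factorization, yields convergence~\eqref{E:Antoinfty2}.

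Combining the two inclusions, asymptotic stability of $\systemAB$ in the class of perturbations $\setA$ and controls $\setB$ is equivalent to asymptotic stability of $\systemA$ in the class of matrices $\setA\setB$ (this is exactly the remark the author makes just above the statement). Applying Proposition~\ref{P:1} to the matrix set $\setA\setB$ then gives the desired equivalence with $\rho(\setA\setB)<1$.

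There is no genuine obstacle here: the argument is essentially a bookkeeping observation together with an appeal to Proposition~\ref{P:1}. The only point that deserves a sentence of care is the ``every $C_n\in\setA\setB$ factors'' step, which is immediate from the definition of $\setA\setB$ and requires no uniqueness of factorization, since one only needs the existence of at least one admissible pair $(A_n,B_n)$ realizing each $C_n$.
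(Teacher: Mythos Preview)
Your proposal is correct and follows exactly the paper's approach: the paper states just before the theorem that asymptotic stability of $\systemAB$ is ``obviously'' equivalent to asymptotic stability of $\systemA$ in the class $\setA\setB$, and then invokes the relevant proposition. Your reduction argument spells out this equivalence carefully and appeals to Proposition~\ref{P:1}; note that the paper's text actually cites Proposition~\ref{P:2} here, which appears to be a typo, since Proposition~\ref{P:1} is the one characterizing asymptotic stability via $\rho(\cdot)<1$.
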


The question of the stabilizability of the system $\systemAB$ is less obvious. Let us consider only two variants of such stabilizability.

\begin{definition}\label{D:pathstab}
We say that a system $\systemAB$, governed by equation~\eqref{E:main}, is \emph {path-dependent stabilizable} in the class of all perturbations $\setA$ of the plant $\plantA$ by means of controls $\setB$ of the controller $\controllerB$ if for any sequence of matrices $\{A_{n}\in\setA\}$ (perturbations of the plant $\plantA$) there is a sequence of matrices $\{B_{n}\in\setB\}$ (control actions of the controller $\controllerB$) such that, for each initial condition $x(0)$, the convergence~\eqref{E:Antoinfty2} holds.
\end{definition}

\begin{definition}\label{D:perstab}
We say that the system $\systemAB$, governed by equation~\eqref{E:main}, is \emph {path-independent periodically stabilizable} if there exists a (universal) periodic sequence of matrices $\{B_{n}\in\setB\}$ (control actions of the controller $\controllerB$) such that, for each sequence of matrices $\{A_{n}\in\setA\}$ (perturbations of the plant $\plantA$) and each initial condition $x(0)$, the convergence~\eqref{E:Antoinfty2} holds.
\end{definition}

The question of the stabilizability of the system $\systemAB$ is close to the game-theoretic statements~\cite{ACDDHK:STACS15,BMRLL:AI16}, in which there are two players,  external influences and a controller, which alternately act on the plant, and first of which seeks to make the system as unstable as possible, and the second tries to stabilize it.

It is clear that path-independent periodically stabilizable systems are path-de\-pendent stabilizable. Moreover, in both definitions of stabilizability of the system $\systemAB$ the condition that the convergence~\eqref{E:Antoinfty2} holds for each initial condition $x(0)$ is equivalent to the condition
\[
\|A_{n}B_{n}\cdots A_{1}B_{1}\|\to0\quad\text{as}\quad n\to\infty.
\]

\begin{remark}\label{R:PDPS}
It would be possible to introduce pointwise analogs of the concepts of path-dependent and path-independent stabilizability, cf., e.g.,~\cite{JunMas:SIAMJCO17}, but this is not the purpose of the paper.
\end{remark}

\section{Minimax joint spectral radii}\label{S:main}

By analogy with the lower spectral radius characterizing the uniform stabilizability of the uncontrolled system $\systemA $, governed by equation~\eqref{E:mainA}, there naturally arises the desire to introduce some numeric values characterizing the path-independent and path-dependent stabilizability of the control system $\systemAB$, governed by equation~\eqref{E:main}. As candidates for such numeric values, we propose, respectively, the quantities
\begin{equation}\label{E:minmaxrad}
\mu(\setA,\setB)=\lim_{n\to\infty}\mu_{n}(\setA,\setB)^{\frac{1}{n}},\qquad
\eta(\setA,\setB)= \lim_{n\to\infty} \eta_{n}(\setA,\setB)^{\frac{1}{n}},
\end{equation}
where, for each $n=1,2,\ldots$\,,
\begin{equation}\label{E:defmnen}
\begin{aligned}
\mu_{n}(\setA,\setB)&=\max_{A_{i}\in\setA} \min_{B_{i}\in\setB}\|A_{n}B_{n}\cdots A_{1}B_{1}\|,\\
\eta_{n}(\setA,\setB)&= \min_{B_{i}\in\setB}\max_{A_{i}\in\setA} \|A_{n}B_{n}\cdots A_{1}B_{1}\|,
\end{aligned}
\end{equation}
and $\|\cdot\|$ is some norm on the space of matrices of dimension $N\times{}N$. Since the maximin of any function does not exceed its minimax, then
\[
\mu(\setA,\setB)\le\eta(\setA,\setB),
\]
which justifies the following definition.

\begin{definition}\label{D:minimaxSR}
Let $\{\setA,\setB\}$ be a pair of sets of matrices of dimension $N\times{}M $ and $M\times{}N$, respectively.  The quantity $\mu(\setA,\setB)$ will be called the \emph{lower}, and the quantity $\eta(\setA,\setB)$ the \emph{upper minimax joint spectral radius} of the pair $\{\setA,\setB\}$.
\end{definition}

The existence of the limits in~\eqref{E:minmaxrad} results from the following Lemma~\ref{L:MJSR-correctness}, the proof of which is given in Section~\ref{S:proof-MJSR-correctness}. Recall that a norm $\|\cdot\|$ in the space of matrices of dimension $N\times N$  is said to be \emph{submultiplicative} if $\|XY\|\le\|X\|\cdot\|Y\|$ for any matrices $X$ and $Y$. In particular, the matrix norm $\|\cdot\|$ is submultiplicative if it is generated by some vector norm, i.e. its value $\|A\| $ on the matrix $A$ is defined by the equality $\|A\|=\sup_{x\neq0}\frac{\|Ax\|}{\|x\|}$, where $\|x\|$ and $\|Ax\|$ are the norms of the corresponding vectors in $\mathbb{R}^{N}$.

\begin{lemma}\label{L:MJSR-correctness}
For any finite sets of matrices $\setA$ and $\setB$, the limits in~\eqref{E:minmaxrad} exist and do not depend on the norm $\|\cdot\|$. Moreover, if the norm $\|\cdot\|$ in~\eqref{E:defmnen} is submultiplicative, then
\begin{equation}\label{E:MJSR-inf}
\mu(\setA,\setB)=\inf_{n\ge0}\mu_{n}(\setA,\setB)^{\frac{1}{n}},\qquad
\eta(\setA,\setB)= \inf_{n\ge0} \eta_{n}(\setA,\setB)^{\frac{1}{n}}.
\end{equation}
\end{lemma}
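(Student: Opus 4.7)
My plan is to reduce existence of the limits and the infimum formulas to Fekete's subadditive lemma, applied to the sequences $\log\mu_n(\setA,\setB)$ and $\log\eta_n(\setA,\setB)$; the heart of the argument is the submultiplicativity
\[
\mu_{m+n}(\setA,\setB)\le\mu_m(\setA,\setB)\,\mu_n(\setA,\setB),\qquad \eta_{m+n}(\setA,\setB)\le\eta_m(\setA,\setB)\,\eta_n(\setA,\setB),
\]
which I would establish first under the assumption that $\|\cdot\|$ is submultiplicative. Norm-independence of the final limit would then follow from norm equivalence on the finite-dimensional space of $N\times N$-matrices, since the comparison constants get absorbed after taking $n$-th roots.

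For the $\mu$-inequality I fix an arbitrary tuple $A_1,\ldots,A_{m+n}\in\setA$ and split the product into a right block $R=A_nB_n\cdots A_1B_1$ and a left block $L=A_{m+n}B_{m+n}\cdots A_{n+1}B_{n+1}$. Choosing $B_1,\ldots,B_n$ to minimize $\|R\|$ gives $\|R\|\le\mu_n(\setA,\setB)$, because the definition of $\mu_n$ as a max-min guarantees $\min_{B_i\in\setB}\|A_nB_n\cdots A_1B_1\|\le\mu_n$ for \emph{every} fixed $(A_1,\ldots,A_n)$; a symmetric argument bounds $\|L\|\le\mu_m(\setA,\setB)$. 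Submultiplicativity of the norm then yields $\|LR\|\le\mu_m\mu_n$, and since this upper-bounds the inner min in the definition of $\mu_{m+n}$ for every choice of the $A_i$'s, taking the outer maximum completes the inequality.

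The min-max order in $\eta_n$ forces a dual argument. Since $\setB$ is finite, the outer minimum is attained, so I pick $B^*_1,\ldots,B^*_n\in\setB$ realizing $\eta_n=\max_{A_i\in\setA}\|A_nB^*_n\cdots A_1B^*_1\|$ and, analogously, $B^*_{n+1},\ldots,B^*_{m+n}$ realizing $\eta_m$. Substituting this fixed tuple into the definition of $\eta_{m+n}$ and exploiting the fact that $L$ and $R$ depend on disjoint blocks of $A$-variables, the maximum over $A_1,\ldots,A_{m+n}$ factors through submultiplicativity of the norm into $(\max_{A_{n+1},\ldots,A_{m+n}}\|L\|)(\max_{A_1,\ldots,A_n}\|R\|)\le\eta_m\eta_n$. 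I expect this bookkeeping — one must commit to the $B^*_i$'s \emph{before} varying the $A_i$'s — to be the main subtlety, and this is exactly where finiteness of $\setB$ is essential.

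Once both submultiplicativity bounds are in hand, Fekete's lemma (allowing the value $-\infty$ to cover the degenerate case where some $\mu_n$ or $\eta_n$ vanishes) yields simultaneously existence of the limits and the identities~\eqref{E:MJSR-inf}. For the remaining claim of norm-independence, I pick a submultiplicative reference norm $\|\cdot\|^\star$ (say, an operator norm) and use $c\,\|X\|^\star\le\|X\|\le C\,\|X\|^\star$ from norm equivalence to sandwich the general-norm sequences between $c\,\mu_n^\star$ and $C\,\mu_n^\star$; taking $n$-th roots and letting $n\to\infty$ collapses the constants to $1$, so the limit for an arbitrary norm exists and coincides with the one computed from $\|\cdot\|^\star$, and similarly for $\eta$.
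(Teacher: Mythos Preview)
Your proposal is correct and follows essentially the same route as the paper: establish submultiplicativity of $\mu_n$ and $\eta_n$ for a submultiplicative norm, invoke Fekete's lemma for existence and the infimum formulas, then use norm equivalence to get norm-independence. The only stylistic difference is that the paper packages the block-splitting step as an abstract auxiliary lemma (for nonnegative functions $H(x,y,u,v)\le F(x,u)G(y,v)$ one has $\min_{u,v}\max_{x,y}H\le(\min_u\max_x F)(\min_v\max_y G)$ and the dual inequality), whereas you carry out the same argument concretely with the matrices; the content is identical.
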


It is natural to expect that in the general case $\mu(\setA,\setB)\neq\eta(\setA,\setB)$, which is confirmed by the following example.

\begin{example}\label{Ex:2}
Consider the sets $\setA=\{A_{1},A_{2}\}$ and $\setB=\{B_{1},B_{2}\}$, where
\[
A_{1}=\begin{bmatrix}
  2&0\\0&\frac{1}{2}
\end{bmatrix},\quad
A_{2}=\begin{bmatrix}
  3&0\\0&\frac{1}{3}
\end{bmatrix},\quad
B_{1}=\begin{bmatrix}
  \frac{1}{2}&0\\0&2
\end{bmatrix},\quad
B_{2}=\begin{bmatrix}
  \frac{1}{3}&0\\0&3
\end{bmatrix},
\]
then
\begin{equation}\label{E:mueat1}
\mu(\setA,\setB)= 1,\qquad\eta(\setA,\setB)> 1.
\end{equation}

To prove relations~\eqref{E:mueat1}, note first that by Lemma~\ref{L:MJSR-correctness} the quantities $\mu(\setA,\setB)$ and $\eta(\setA,\setB)$ do not depend on the choice of the norm. Therefore, in this example, for $\|\cdot\|$ we choose the matrix norm generated by the vector max-norm: $\|x\|=\max\left\{|x_{1}|,|x_{2}|\right\}$, where  $x=\{x_{1},x_{2}\}\in\mathbb{R}^{2}$.

We further note that for an arbitrary $(N\times N)$-matrix $A$ the inequalities
\[
\|A\|\ge\rho(A)\ge \left(\det A\right)^{\frac{1}{N}}
\]
hold, where $\rho(A)$ is the spectral radius of the matrix $A$. And since the determinant of each of the matrices $A_{1}$, $A_{2}$, $B_{1}$ and $B_{2}$ is $1$, then
\[
\|A_{i_{n}}B_{j_{n}}\cdots A_{i_{i}}B_{j_{1}}\|\ge \rho(A_{i_{n}}B_{j_{n}}\cdots A_{i_{1}}B_{i_{1}})\ge 1,
\]
for any choice of the indices $i_{k},j_{k}\in\{1,2\}$, $k=1,2\ldots,n$. Consequently, in our case
\begin{equation}\label{E:mueat0}
\mu(\setA,\setB)\ge 1,\qquad\eta(\setA,\setB)\ge 1.
\end{equation}

We now show that in fact stronger relations~\eqref{E:mueat1} are valid. Let us prove the first of them. Given an integer $n\ge 1$, consider an arbitrary collection of matrices $A_{i_{1}},\ldots, A_{i_{n}}$, where $i_{k}\in\{1,2\}$ for $k=1,2\ldots,n$. Then, for each $k=1,2\ldots,n$, the equality $A_{i_{k}}B_{i_{k}}=I$ takes place. So,  $A_{i_{n}}B_{i_{n}}\cdots A_{i_{i}}B_{i_{1}}=I$. In this case
\[
\min_{B_{i}\in\setB}\|A_{i_{n}}B_{n}\cdots A_{i_{1}}B_{1}\|\le \|A_{i_{n}}B_{i_{n}}\cdots A_{i_{1}}B_{i_{1}}\| =\|I\| = 1,
\]
and therefore, because of the arbitrariness of the matrices $A_{i_{1}},\ldots, A_{i_{n}}$,
\[
\max_{A_{i}\in\setA} \min_{B_{i}\in\setB}\|A_{n}B_{n}\cdots A_{1}B_{1}\|\le 1.
\]
Hence $\mu(\setA,\setB)\le 1$, which together with~\eqref{E:mueat0} leads to the first of relations~\eqref{E:mueat1}.

Let us prove the second of relations~\eqref{E:mueat1}. Again, given an integer $n\ge 1$, we consider an arbitrary set of matrices $B_{j_{1}},\ldots, B_{j_{n}}$, where $j_{k}\in\{1,2\}$ for $k=1,2\ldots,n$. Denote by $p$ the number of those indices $j_{k}$ for which $j_{k}=1$. Then for the remaining $n-p$ indices, the equality $j_ {k}=2$ will hold. Obviously, one of the numbers $p$ or $n-p$ is at least $\frac{n}{2}$. Therefore, without loss of generality, we can assume that $p\ge\frac{n}{2}$.

We now define the sequence $\{i_{k}\}$, $k=1,2\ldots,n$, setting $i_{k}\equiv 2$. In this case, in the matrix product $A_{i_{n}}B_{i_{n}}\cdots A_{i_{1}}B_{i_{1}}$  the factors $A_{i_{k}}B_{i_{k}}$ with sub-indices $k$ satisfying $j_{k}=1$ will coincide with the product
\begin{equation}\label{E:A2B1}
A_{2}B_{1}=
\begin{bmatrix}
  3&0\\0&\frac{1}{3}
\end{bmatrix}\cdot
\begin{bmatrix}
  \frac{1}{2}&0\\0&2
\end{bmatrix}=
\begin{bmatrix}
  \frac{3}{2}&0\\0&\frac{2}{3}
\end{bmatrix}.
\end{equation}
And the factors $A_{i_{k}}B_{i_{k}}$ with sub-indices $k$ for which $j_{k}=2 $ will coincide with the product $A_{2}B_{2}=I$. Therefore, the matrix product $A_{i_{n}}B_{i_{n}}\cdots A_{i_{1}}B_{i_{1}}$ can be computed explicitly:
\[
A_{i_{n}}B_{i_{n}}\cdots A_{i_{1}}B_{i_{1}}=\left(A_{2}B_{1}\right)^{p},
\]
whence in view of~\eqref{E:A2B1}
\[
\|A_{i_{n}}B_{i_{n}}\cdots A_{i_{1}}B_{i_{1}}\|= \left(\frac{3}{2}\right)^{p}\ge \left(\frac{3}{2}\right)^{\frac{n}{2}}.
\]
Then
\[
\max_{A_{i}\in\setA} \|A_{n}B_{j_{n}}\cdots A_{1}B_{j_{1}}\|\ge \|A_{i_{n}}B_{j_{n}}\cdots A_{i_{1}}B_{j_{1}}\| \ge \left(\frac{3}{2}\right)^{\frac{n}{2}},
\]
and because of the assumed arbitrariness of the matrices $B_{j_{1}},\ldots,B_{j_{n}}$, we get that
\[
\min_{B_{i}\in\setB}\max_{A_{i}\in\setA} \|A_{n}B_{n}\cdots A_{1}B_{1}\| \ge \left(\frac{3}{2}\right)^{\frac{n}{2}}.
\]
Therefore, $\eta(\setA,\setB)\ge \left(\frac{3}{2}\right)^{\frac{1}{2}}$, which leads to the second of relations~\eqref{E:mueat1}.
\end{example}

The following two theorems, the proofs of which are given in Section~\ref{S:proofstab}, are the main ones in the present paper. They confirm that the minimax joint spectral radii can actually act as characteristics of the stabilizability of the system $\systemAB$.

\begin{theorem}\label{T:pointwizestab}
A system $\systemAB$, governed by equation~\eqref{E:main}, is path-dependent stabilizable in the class of all perturbations $\setA$ and controls $\setB$ if and only if $\mu(\setA,\setB)<1$.
\end{theorem}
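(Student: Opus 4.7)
The plan is to establish both directions using Lemma~\ref{L:MJSR-correctness} together with a compactness argument on the product space $\setA^{\mathbb{N}}$. For sufficiency, I would assume $\mu(\setA,\setB)<1$ and, working with a submultiplicative norm, invoke Lemma~\ref{L:MJSR-correctness} to fix $n_{0}$ and $\lambda\in(0,1)$ with $\mu_{n_{0}}(\setA,\setB)\le\lambda^{n_{0}}$. Given any sequence $\{A_{n}\in\setA\}$, I split the indices into consecutive blocks of length $n_{0}$ and, on each block, pick controls $B_{i}\in\setB$ attaining the minimum in the definition of $\mu_{n_{0}}$, so that each block product has norm at most $\lambda^{n_{0}}$. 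Submultiplicativity then chains these bounds into $\|A_{kn_{0}}B_{kn_{0}}\cdots A_{1}B_{1}\|\le\lambda^{kn_{0}}$, while a tail of fewer than $n_{0}$ factors contributes a universal constant, so $\|A_{n}B_{n}\cdots A_{1}B_{1}\|\to0$.

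For necessity I plan a compactness argument. Assume the system is path-dependent stabilizable. For each $\{A_{k}\}\in\setA^{\mathbb{N}}$ the stabilizing sequence $\{B_{k}\}$ drives the norm below $1$ at some step $n$, hence $\min_{B_{i}\in\setB}\|A_{n}B_{n}\cdots A_{1}B_{1}\|<1$. I would consider, for each $n\ge 1$, the set
\[
U_{n}=\bigl\{\{A_{k}\}\in\setA^{\mathbb{N}}:\ \min_{B_{i}\in\setB}\|A_{n}B_{n}\cdots A_{1}B_{1}\|<1\bigr\},
\]
which is clopen in the product topology because it depends on only finitely many coordinates of a finite alphabet. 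The $U_{n}$ cover the compact space $\setA^{\mathbb{N}}$; extracting a finite subcover yields an integer $N_{0}$ such that for every $\{A_{k}\}$ some $n\le N_{0}$ witnesses the strict inequality. Finiteness of $\setA^{N_{0}}$ then upgrades this to a uniform constant $\delta<1$: for every $(A_{1},\dots,A_{N_{0}})\in\setA^{N_{0}}$ there exist $n\le N_{0}$ and $B_{1},\dots,B_{n}\in\setB$ with $\|A_{n}B_{n}\cdots A_{1}B_{1}\|\le\delta$.

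The remaining step bootstraps $\delta$ into a bound on $\mu_{N}$ for large $N$. Given any $(A_{1},\dots,A_{N})$, I iterate the uniform estimate: cut off a prefix of length $n_{1}\le N_{0}$ with block-product norm $\le\delta$, then apply the same bound to the shifted tuple starting at $A_{n_{1}+1}$ to obtain a prefix of length $n_{2}\le N_{0}$, and continue as long as another $N_{0}$ factors fit inside $[1,N]$. After $K$ slices with total consumed length $m_{K}=n_{1}+\cdots+n_{K}$, submultiplicativity gives $\|A_{m_{K}}B_{m_{K}}\cdots A_{1}B_{1}\|\le\delta^{K}$, while the stopping rule $m_{K}+N_{0}>N$ forces $K>N/N_{0}-1$. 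Bounding the leftover of at most $N_{0}$ factors by $C^{N_{0}}$, with $C=\max_{A\in\setA,B\in\setB}\|AB\|$, yields $\mu_{N}(\setA,\setB)\le C^{N_{0}}\delta^{N/N_{0}-1}$, which is $<1$ for $N$ large enough; Lemma~\ref{L:MJSR-correctness} then gives $\mu(\setA,\setB)\le\mu_{N}(\setA,\setB)^{1/N}<1$. I expect the compactness extraction of the uniform $N_{0}$ to be the main subtlety and the sole place where finiteness of $\setA$ is essential; the block iteration and the sufficiency direction are then routine consequences of submultiplicativity.
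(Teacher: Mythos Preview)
Your sufficiency direction is essentially the paper's: both fix a block length $k$ (your $n_{0}$) at which $\mu_{k}<1$, then tile any perturbation sequence into length-$k$ blocks and choose the minimizing controls on each block. The paper packages this as Lemma~\ref{L:mJSRlessone}(i), but the content is identical.

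For necessity the two routes diverge. The paper does not argue directly: it invokes Lemma~\ref{L:pathstab} (quoted from~\cite{Koz:Arxiv17-1}), which asserts that path-dependent stabilizability already implies a \emph{uniform} exponential bound $\|A_{n}B_{n}\cdots A_{1}B_{1}\|\le C\lambda^{n}$ with $C,\lambda$ independent of the perturbation sequence. Once that is granted, picking $k$ with $C\lambda^{k}<1$ gives condition~\eqref{E:mucrit}, and Lemma~\ref{L:mJSRlessone}(i) yields $\mu(\setA,\setB)<1$ immediately. Your argument, by contrast, is self-contained: you extract the uniformity yourself via compactness of $\setA^{\mathbb{N}}$ and a finite-subcover argument, landing on a uniform $N_{0}$ and $\delta<1$, and then run a variable-length block iteration to bound $\mu_{N}$. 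In effect you are reproving (the finite-tuple version of) Lemma~\ref{L:pathstab} inline. The trade-off: the paper's proof is shorter but leans on an external citation for the key step; your proof is longer but exposes the mechanism (finiteness of $\setA$ $\Rightarrow$ compactness $\Rightarrow$ uniform contraction horizon). Both are correct. One minor cosmetic point: in your final bound $C^{N_{0}}\delta^{N/N_{0}-1}$ you should take $C=\max\bigl(1,\max_{A,B}\|AB\|\bigr)$ so that the tail estimate $C^{N-m_{K}}\le C^{N_{0}}$ holds regardless of whether $C\ge1$; this does not affect the conclusion since you only need $\mu_{N}^{1/N}\to\delta^{1/N_{0}}<1$.
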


\begin{theorem}\label{T:strongstab}
A system $\systemAB$, governed by equation~\eqref{E:main}, is path-independent periodically stabilizable in the class of all perturbations $\setA$ and controls $\setB$ if and only if $\eta(\setA,\setB)<1$.
\end{theorem}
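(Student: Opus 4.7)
My approach is to exploit the fact that a periodic $B$-sequence of period $p$ turns the controlled system into an \emph{uncontrolled} switching system whose matrix factors range over a finite derived set, reducing everything to the classical joint spectral radius and to Proposition~\ref{P:1}. I will prove both implications separately, drawing on Lemma~\ref{L:MJSR-correctness} for sufficiency and on the above reduction for necessity.

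For sufficiency, assume $\eta(\setA,\setB)<1$ and fix a submultiplicative norm. The infimum characterization~\eqref{E:MJSR-inf} yields an integer $n_{0}$ with $\alpha:=\eta_{n_{0}}(\setA,\setB)^{1/n_{0}}<1$, and since $\setB$ is finite one can select $B_{1}^{\ast},\ldots,B_{n_{0}}^{\ast}\in\setB$ attaining the inner minimum, so that $\|A_{n_{0}}B_{n_{0}}^{\ast}\cdots A_{1}B_{1}^{\ast}\|\le\alpha^{n_{0}}$ for every choice of $A_{i}\in\setA$. I then define the periodic control $B_{kn_{0}+j}:=B_{j}^{\ast}$ for $k\ge 0$ and $1\le j\le n_{0}$. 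For any $A$-sequence and any $n=mn_{0}+r$ with $0\le r<n_{0}$, grouping the factors into $m$ length-$n_{0}$ blocks plus a short tail and invoking submultiplicativity gives $\|A_{n}B_{n}\cdots A_{1}B_{1}\|\le K\alpha^{mn_{0}}$, where $K$ depends only on $\setA$, $\setB$ and $n_{0}$; this tends to $0$ as $n\to\infty$, so the chosen periodic control stabilizes the system path-independently.

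For necessity, suppose some periodic control $\{B_{n}\}$ of period $p$ achieves $\|A_{n}B_{n}\cdots A_{1}B_{1}\|\to 0$ for every $A$-sequence. Introduce the finite set of $N\times N$ matrices
\[
\mathcal{C}=\{A_{p}B_{p}\cdots A_{1}B_{1}:~A_{i}\in\setA\}.
\]
A direct bookkeeping argument shows that the product $C_{k}\cdots C_{1}$ over any choice $C_{i}\in\mathcal{C}$ coincides with $A_{kp}B_{kp}\cdots A_{1}B_{1}$ for a suitable $A$-sequence, and conversely every such product of length $kp$ arises in this way. Hence the hypothesis is exactly the asymptotic stability of the uncontrolled switching system driven by $\mathcal{C}$, and Proposition~\ref{P:1} (applied to $\mathcal{C}$) gives $\rho(\mathcal{C})<1$. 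Consequently there exist $k_{0}$ and $\theta<1$ with $\|C_{k_{0}}\cdots C_{1}\|\le\theta^{k_{0}}$ uniformly in $C_{i}\in\mathcal{C}$; translated back, this reads $\eta_{k_{0}p}(\setA,\setB)\le\theta^{k_{0}}$, whence $\eta(\setA,\setB)\le\theta^{1/p}<1$.

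The step I expect to be most delicate is the necessity direction, specifically the passage from the pointwise convergence $A_{n}B_{n}\cdots A_{1}B_{1}\to 0$ (one limit per $A$-sequence) to the uniform geometric bound on the minimax required for $\eta(\setA,\setB)<1$. This gap is bridged by isolating the correct finite auxiliary set $\mathcal{C}$ and then invoking the nontrivial classical fact, quoted in the text as Proposition~\ref{P:1}, that pointwise stability over a finite matrix set forces its joint spectral radius to be strictly less than one; without this uniformity upgrade one could not produce a single $n$ witnessing $\eta_{n}(\setA,\setB)^{1/n}<1$.
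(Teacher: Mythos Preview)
Your proof is correct. The sufficiency direction is essentially identical to the paper's: both pick a single index $n_{0}$ (respectively $k$) with $\eta_{n_{0}}^{1/n_{0}}<1$, extract the minimizing $B$-block, extend it periodically, and control the partial products by submultiplicativity.

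The necessity direction differs in packaging. The paper quotes Lemma~\ref{L:unistab} (an external result from~\cite{Koz:Arxiv17-1}) which asserts directly that a periodically stabilizing control sequence forces a uniform exponential bound $\|A_{n}\Bar{B}_{n}\cdots A_{1}\Bar{B}_{1}\|\le C\lambda^{n}$, and then reads off $\eta_{k}\le\sigma$ for large $k$. You instead make the reduction explicit: the length-$p$ blocks form a finite set $\mathcal{C}$, the stabilizability hypothesis becomes precisely asymptotic stability of the uncontrolled $\mathcal{C}$-system, and Proposition~\ref{P:1} yields $\rho(\mathcal{C})<1$, from which the bound $\eta_{k_{0}p}\le\theta^{k_{0}}$ follows. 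The underlying analytic content---upgrading pointwise convergence over a finite matrix family to uniform exponential decay---is the same in both arguments; what your route buys is self-containment, since Proposition~\ref{P:1} is the classical Rota--Strang/Berger--Wang fact already stated in the paper, whereas Lemma~\ref{L:unistab} is a black box imported from a companion paper. Conversely, the paper's route is shorter once that lemma is granted.
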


\section{Other minimax characteristics of matrix products}\label{S:MMprop}

According to~\cite{BerWang:LAA92}, in the definition~\eqref{E-JSR}  of the joint spectral radius $\rho(\setA)$, the norm $\|\cdot\|$ can be replaced by the spectral radius $\rho(\cdot)$ (with the simultaneous change of the limit to the upper limit):
\begin{equation}\label{E-GSR}
\rho(\setA)= \limsup_{n\to\infty}\sup\left\{\rho{(A_{n}\cdots A_{1})}^{\frac{1}{2}}:~A_{i}\in\setA\right\},
\end{equation}
the corresponding assertion is known as the Berger-Wang theorem~\cite{BerWang:LAA92}. Similarly, if in the definition~\eqref{E-LSR} we replace the norm $\|\cdot\|$ by the spectral radius $\rho(\cdot)$ and the limit by the lower limit, then we obtain another formula for the lower spectral radius
\begin{equation}\label{E-LSR1}
\Check{\rho}(\setA)= \liminf_{n\to\infty}\inf\left\{\rho{(A_{n}\cdots A_{1})}^{\frac{1}{2}}:~A_{i}\in\setA\right\}.
\end{equation}
For finite sets $\setA$ the validity of equality~\eqref{E-LSR1} was established in~\cite[Theorem~B1]{Gurv:LAA95}, and later for arbitrary sets $\setA$ a similar statement was proved in~\cite[Lemma~1.12]{Theys:PhD05} and~\cite[Theorem~1]{Czornik:LAA05}.

By analogy with formulas~\eqref{E-GSR} and~\eqref{E-LSR1} for the joint and lower spectral radius, we define the following minimax characteristics of matrix products:
\begin{alignat}{2}\label{E:minmaxhigh}
\Hat{\mu}(\setA,\setB)&=\limsup_{n\to\infty}\Bar{\mu}_{n}(\setA,\setB)^{\frac{1}{n}},&\quad
\Hat{\eta}(\setA,\setB)&= \limsup_{n\to\infty} \Bar{\eta}_{n}(\setA,\setB)^{\frac{1}{n}},
\intertext{and also}
\label{E:minmaxlow}
\Check{\mu}(\setA,\setB)&=\liminf_{n\to\infty}\Bar{\mu}_{n}(\setA,\setB)^{\frac{1}{n}},&\quad
\Check{\eta}(\setA,\setB)&= \liminf_{n\to\infty} \Bar{\eta}_{n}(\setA,\setB)^{\frac{1}{n}},
\end{alignat}
where, for each $n=1,2,\ldots$\,,
\begin{equation}\label{E:defbmbe}
\begin{aligned}
\Bar{\mu}_{n}(\setA,\setB)&=\max_{A_{i}\in\setA} \min_{B_{i}\in\setB}\rho(A_{n}B_{n}\cdots A_{1}B_{1}),\\
\Bar{\eta}_{n}(\setA,\setB)&= \min_{B_{i}\in\setB}\max_{A_{i}\in\setA} \rho(A_{n}B_{n}\cdots A_{1}B_{1}).
\end{aligned}\end{equation}

Obviously, along with the already introduced lower and upper minimax joint spectral radii $\mu(\setA,\setB)$ and $\eta(\setA,\setB)$, the quantities~\eqref{E:minmaxhigh} and~\eqref{E:minmaxlow} could also claim the role of numeric quantities that characterize the stabilizability of a controlled system $\systemAB$, governed by equation~\eqref{E:main}.

Since the spectral radius of a linear operator does not exceed its norm, and the quantities $\mu(\setA,\setB)$ and $\eta(\setA,\setB)$, as noted in Lemma~\ref{L:MJSR-correctness}, do not depend on the choice of the norm, then
\[
\mu(\setA,\setB)\ge \Hat{\mu}(\setA,\setB)\ge\Check{\mu}(\setA,\setB),\quad
\eta(\setA,\setB)\ge \Hat{\eta}(\setA,\setB)\ge\Check{\eta}(\setA,\setB).
\]
And since the maximin of any function does not exceed its minimax, then
\[
\Hat{\mu}(\setA,\setB)\le\Hat{\eta}(\setA,\setB),\quad\Check{\mu}(\setA,\setB)\le\Check{\eta}(\setA,\setB).
\]

From Example~\ref{Ex:2} it follows that in the general case $\mu(\setA,\setB)\neq\eta(\setA,\setB)$. And since all matrices in Example~\ref{Ex:2} are diagonal, then their norms coincide with the corresponding spectral radii. This implies that under the conditions of Example~\ref{Ex:2} the inequalities
\[
\Hat{\mu}(\setA,\setB)<\Hat{\eta}(\setA,\setB),\quad \Check{\mu}(\setA,\setB)<\Check{\eta}(\setA,\setB)
\]
are also satisfied. In this connection, the question arises of the existence of classes of matrices $\setA$ and $\setB$ for which the following equalities hold:
\begin{equation}\label{E:muetaeqalities}
\mu(\setA,\setB)=\eta(\setA,\setB)\quad\text{and/or}\quad \Hat{\mu}(\setA,\setB)=\Hat{\eta}(\setA,\setB),\quad  \Check{\mu}(\setA,\setB)=\Check{\eta}(\setA,\setB).
\end{equation}
At least one class of such matrices, introduced in~\cite{ACDDHK:STACS15,Koz:INFOPROC16:e,Koz:LMA17}, is described in the following Theorem~\ref{T:Hsets}. However, before proceeding to its formulation, we have to recall the necessary definitions and facts.

For the vectors  $x,y\in\mathbb{R}^{N}$, we will write $x\geqslant y$  (respectively, $x>y$) if the coordinates of the vector $x$ are not less than the corresponding coordinates of the vector $y$ (respectively, strictly greater than the corresponding coordinate of the vector~$y$). Similar notation will be applied to matrices. As usual, a vector or a matrix will be called nonnegative if all their elements are nonnegative, and positive if all their elements are positive.

We denote by $\setM(N,M)$ the set of real $(N\times M)$-matrices. Following~\cite{Koz:LAA16,Koz:LMA17}, we say that a set of positive matrices $\setA\subset\setM(N,M)$ is an \emph{$\setH$-set} (or \emph{hourglass set}) if for each pair $(\Tilde{A},u)$, where $\Tilde{A}$ is a matrix from the set $\setA$, and $u$ is a positive vector, the following statements hold:
\begin{quote}
\begin{itemize}
\item \emph{either $Au\geqslant\Tilde{A}u$ for all $A\in\setA$, or there exists a matrix $\bar{A}\in\setA$ such that $\bar{A}u\leqslant\Tilde{A}u$ and $\bar{A}u\neq\Tilde{A}u$};

\item \emph{either $Au\leqslant\Tilde{A}u$ for all $A\in\setA$, or there exists a matrix $\bar{A}\in\setA$ such that $\bar{A}u\geqslant\Tilde{A}u$ and $\bar{A}u\neq\Tilde{A}u$}.
\end{itemize}
\end{quote}
The set of all compact $\setH$-sets of positive matrices of dimention $N\times{}M$ will be denoted by $\setH(N,M)$.

\begin{example}\label{Ex:3}
We call a set of positive matrices $\setA=\{A_{1},A_{2},\ldots,A_{n}\}$ \emph{linearly ordered} if $0<A_{1}<A_{2}<\cdots<A_{n}$. Obviously, any linearly ordered set of positive matrices is an $\setH$-set of matrices. In particular, any set consisting of a single positive matrix is an $\setH$-set.
\end{example}

\begin{example}\label{Ex:4}
A less trivial and more interesting example of $\setH$-sets of matrices, as shown in~\cite[Lemma~4]{ACDDHK:STACS15} and~\cite[Lemma~1]{Koz:LAA16}, constitutes the totality of \emph{sets of positive matrices with independent row uncertainty}~\cite{BN:SIAMJMAA09} consisting of all matrices
\[
A=\begin{pmatrix}
a_{11}&a_{12}&\cdots&a_{1M}\\
a_{21}&a_{22}&\cdots&a_{2M}\\
\cdots&\cdots&\cdots&\cdots\\
a_{N1}&a_{N2}&\cdots&a_{NM}
\end{pmatrix},
\]
wherein each of the rows $a_{i} = (a_{i1}, a_{i2}, \ldots, a_{iM})$ of dimension $M$ is taken from some set of rows $\setA_{i}$, $i=1,2,\ldots,N$.

For example, if $N=2$  and $\setA_{1}=\{(a,b),(c,d)\}$, $\setA_{2}=\{(p,q),(r,s)\}$ then
\[
\setA=\left\{
\begin{bmatrix}
a&b\\p&q
\end{bmatrix},~
\begin{bmatrix}
c&d\\p&q
\end{bmatrix},~
\begin{bmatrix}
a&b\\r&s
\end{bmatrix},~
\begin{bmatrix}
c&d\\r&s
\end{bmatrix}\right\}
\]
will be the set of matrices with independent row uncertainty.
\end{example}

\begin{example}\label{Ex:5}
In~\cite[Example~3]{Koz:LMA17} a more general construction is described for constructing non-trivial $\setH$-sets of matrices, which uses the fact that the totality of all $\setH $-sets of matrices is algebraically closed~\cite[Theorem~2]{Koz:LAA16} with respect to the operations of Minkowski summation and multiplication over matrix sets.
\end{example}

\begin{theorem}\label{T:Hsets}
Let $\setA,\setB$ be compact $\setH$-sets of positive matrices of dimension $M\times N$ and $N\times M$, respectively. Then
\[
\mu(\setA,\setB)=\eta(\setA,\setB)= \Hat{\mu}(\setA,\setB)=\Hat{\eta}(\setA,\setB)= \Check{\mu}(\setA,\setB)=\Check{\eta}(\setA,\setB).
\]
\end{theorem}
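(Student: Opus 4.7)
My plan is to reduce the six-way equality to the single identity $\rho(\setA\setB)=\Check{\rho}(\setA\setB)$, where $\setA\setB:=\{AB:A\in\setA,\,B\in\setB\}$ is the Minkowski-product set of positive $M\times M$ matrices, and then invoke two known facts about $\setH$-sets.

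First I would establish the sandwich inequalities. Because every element of $\setA\setB$ is by definition some $AB$, there is a bijection between ordered products $A_nB_n\cdots A_1B_1$ with $A_i\in\setA$, $B_i\in\setB$, and length-$n$ products $C_n\cdots C_1$ with $C_i\in\setA\setB$. Combining this with the trivial inequalities $\min_A\min_B\le\max_A\min_B\le\min_B\max_A\le\max_A\max_B$, valid both for the norm and for the spectral radius of the product in question, together with the Berger--Wang formula~\eqref{E-GSR} and its Czornik-type counterpart~\eqref{E-LSR1} applied to the compact set $\setA\setB$, I obtain the three chains
\[
\Check{\rho}(\setA\setB)\le\mu(\setA,\setB)\le\eta(\setA,\setB)\le\rho(\setA\setB),
\]
\[
\Check{\rho}(\setA\setB)\le\Check{\mu}(\setA,\setB)\le\Hat{\mu}(\setA,\setB)\le\rho(\setA\setB),\qquad \Check{\rho}(\setA\setB)\le\Check{\eta}(\setA,\setB)\le\Hat{\eta}(\setA,\setB)\le\rho(\setA\setB).
\]

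Next I would note that $\setA\setB$ is itself a compact $\setH$-set in $\setH(M,M)$. Compactness is automatic, since $\setA\setB$ is the continuous image of the compact product $\setA\times\setB$ under matrix multiplication. The $\setH$-property is inherited from the algebraic closure of the class of $\setH$-sets under Minkowski multiplication recalled in Example~\ref{Ex:5}, i.e.\ \cite[Theorem~2]{Koz:LAA16}.

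Finally I would invoke the principal known property of $\setH$-sets: for every compact $\setH$-set $\setC$ of positive matrices one has $\rho(\setC)=\Check{\rho}(\setC)$; this is established in \cite{Koz:LMA17} (see also \cite{Koz:INFOPROC16:e,ACDDHK:STACS15}). Specialising to $\setC=\setA\setB$ collapses each of the three sandwich chains above, so all six quantities coincide with the common value $\rho(\setA\setB)$. The substantive content of the argument is outsourced to the two cited inputs---(a) closure of the $\setH$-class under matrix multiplication, and (b) the coincidence $\rho=\Check{\rho}$ on $\setH$-sets---so the main obstacle is really bookkeeping: one must carefully check that the spectral-radius-based quantities $\Hat{\mu}$, $\Check{\mu}$, $\Hat{\eta}$, $\Check{\eta}$ are trapped between $\Check{\rho}(\setA\setB)$ and $\rho(\setA\setB)$, which is what forces the appeal to~\eqref{E-GSR} and~\eqref{E-LSR1} for the set $\setA\setB$ rather than an elementary normwise estimate.
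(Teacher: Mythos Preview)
Your sandwich inequalities and the observation that $\setA\setB$ is again a compact $\setH$-set are fine, but the final step is wrong: it is \emph{not} true that $\rho(\setC)=\Check{\rho}(\setC)$ for a compact $\setH$-set $\setC$. What \cite{Koz:LAA16} actually proves (recorded here as~\eqref{E-exact}) is
\[
\rho(\setC)=\max_{C\in\setC}\rho(C),\qquad \Check{\rho}(\setC)=\min_{C\in\setC}\rho(C),
\]
and these two numbers differ as soon as $\setC$ contains matrices of different spectral radii. For a concrete instance within the scope of the theorem, take any linearly ordered $\setH$-set $\setA=\{A_{1},A_{2}\}$ with $0<A_{1}<A_{2}$ and let $\setB=\{B\}$ be a singleton; then $\setA\setB=\{A_{1}B,A_{2}B\}$ has $\Check{\rho}(\setA\setB)=\rho(A_{1}B)<\rho(A_{2}B)=\rho(\setA\setB)$ by Perron--Frobenius, so your sandwich does not collapse. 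The cited works \cite{Koz:LMA17,Koz:INFOPROC16:e,ACDDHK:STACS15} establish the \emph{minimax} identity~\eqref{E-minimax} for $\setH$-sets, not the equality of the joint and lower spectral radii.

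The paper's proof exploits precisely this minimax structure, which your reduction to the single set $\setA\setB$ throws away. One first extracts a saddle point $(\tilde A,\tilde B)$ satisfying~\eqref{E:saddleAB}, and then bounds $\mu$ from below via the \emph{one-sided} set $\tilde A\setB$ (giving $\mu(\setA,\setB)\ge\Check{\rho}(\tilde A\setB)=\min_{B}\rho(\tilde A B)\ge\rho(\tilde A\tilde B)$) and $\eta$ from above via $\setA\tilde B$ (giving $\eta(\setA,\setB)\le\rho(\setA\tilde B)=\max_{A}\rho(A\tilde B)\le\rho(\tilde A\tilde B)$). Together with $\mu\le\eta$ this pins everything to the single value $\rho(\tilde A\tilde B)$. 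The point is that freezing one factor at the saddle matrix turns the problem into a pure $\max$ or a pure $\min$, to which~\eqref{E-exact} genuinely applies; applying~\eqref{E-exact} to the full product set $\setA\setB$ does not.
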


The proof of Theorem~\ref{T:Hsets} is given in Section~\ref{S:proofTHsets}.

\section{Comments and open problems}\label{S:questions}

Let $\setA$ be a set of square matrices of dimension $N\times N$, and $\setI:=\{I\}$ be the one-element set of matrices consisting of the identity matrix of dimension $N\times{}N$. Then the following equalities are obvious:
\begin{equation}\label{E:LSReqs}
\begin{aligned}
\rho(\setA)=\mu(\setA,\setI) = \eta(\setA,\setI)=\Hat{\mu}(\setA,\setI)=\Hat{\eta}(\setA,\setI),\\
\Check{\rho}(\setA)=\mu(\setI,\setA)= \eta(\setI,\setA)=\Check{\mu}(\setI,\setA)=\Check{\eta}(\setI,\setA).
\end{aligned}
\end{equation}

\begin{remark}\label{Rem:continuity}
The joint spectral radius $\rho(\cdot)$ is in the natural sense continuous and even is a locally Lipschitz function of its argument, see the details and exact formulations in~\cite{HStr:LASP92,Jungers:09,Koz:LAA10,Wirth:LAA02}. At the same time, the lower spectral radius $\Check{\rho}(\cdot)$ in the general case is not a continuous function~\cite{BochiMor:PLMS15,BM:JAMS02,Jungers:09}. But due to~\eqref{E:LSReqs}
\[
\mu(\setI,\setA)=\eta(\setI,\setA)=\Check{\rho}(\setA),
\]
and therefore in the general case neither $\mu(\cdot,\cdot)$ nor $\eta(\cdot,\cdot)$ are continuous functions of their arguments.
\end{remark}

In the theory of the joint/lower spectral radius, the Berger-Wang theorem~\cite{BerWang:LAA92,Czornik:LAA05,Gurv:LAA95,Theys:PhD05} plays a significant role. This theorem makes it possible to express the joint and lower spectral radii by means of equalities~\eqref{E-GSR} and~\eqref{E-LSR1}, respectively. In this connection, the following problems arise.

\begin{problem}\label{Q:2}
Do the following equalities hold
\begin{equation}\label{E:muetaeq2}
\begin{aligned}
\mu(\setA,\setB)&=\Hat{\mu}(\setA,\setB), &\eta(\setA,\setB)&=\Hat{\eta}(\setA,\setB),\\  \mu(\setA,\setB)&=\Check{\mu}(\setA,\setB), &\eta(\setA,\setB)&=\Check{\eta}(\setA,\setB)
\end{aligned}
\end{equation}
(if at least some are true), i.e. are the analogues of the Berger-Wang theorem valid for the corresponding minimax quantities?
\end{problem}

\begin{problem}\label{Q:3}
If the answer to the previous problem is generally negative, then for which sets of matrices $\setA$ and $\setB$ all or some of equalities~\eqref{E:muetaeq2} hold?
\end{problem}

Although Theorem~\ref{T:Hsets} describes one of the cases in which equalities~\eqref{E:muetaeqalities} are true, nevertheless the following problem remains relevant.

\begin{problem}\label{Q:5}
Since according to Example~\ref{Ex:2} in the general case
\[
\mu(\setA,\setB)\neq\eta(\setA,\setB),\quad \Hat{\mu}(\setA,\setB)\neq\Hat{\eta}(\setA,\setB),\quad \Check{\mu}(\setA,\setB)\neq\Check{\eta}(\setA,\setB),
\]
then for which sets of matrices $\setA$ and $\setB$ all or some of equalities~\eqref{E:muetaeqalities} hold?
\end{problem}

\section{Proofs}\label{S:aux}
\subsection{Proof of Lemma~\ref{L:MJSR-correctness}}\label{S:proof-MJSR-correctness}

We need the following auxiliary assertion.
\begin{lemma}\label{L1} Let $ X $, $ Y $, $ U $ and $ V $ be compacts in some topological spaces. Then the following assertions hold.
\begin{enumerate}[\rm(i)]
\item Let $f(x)$, $g(y)$ and $h(x,y)$ be continuous nonnegative functions on the sets $X$, $Y$ and $X\times{}Y$, respectively, such that for any $x\in X$, $y\in Y$ the following inequality holds:
\begin{equation}\label{E:1}
h(x,y)\le f(x)\, g(y).
\end{equation}
Then
\begin{equation}\label{E:2}
\begin{aligned}
\max_{x\in X, y\in Y} h(x,y)&\le \Bigl(\max_{x\in X} f(x)\Bigr)\Bigl(\max_{y\in Y} g(y)\Bigr),\\
\min_{x\in X, y\in Y} h(x,y)&\le \Bigl(\min_{x\in X} f(x)\Bigr)\Bigl(\min_{y\in Y} g(y)\Bigr).
\end{aligned}
\end{equation}

\item Let $F(x,u)$, $G(y,v)$ and $H(x,y,u,v)$ be continuous nonnegative functions on the sets $X\times{}U$, $Y\times{}V$ and $X\times{}Y\times{}U\times{}V$, respectively, such that for any $x\in X$, $y\in Y$, $u\in U$, $v\in V$ the following inequality holds:
\begin{equation}\label{E:1ii}
H(x,y,u,v)\le F(x,u)\, G(y,v).
\end{equation}
Then
\begin{equation}\label{E:2ii}
\begin{aligned}
\min_{u\in U, v\in V} \max_{x\in X, y\in Y} H(x,y,u,v)&\le \Bigl(\min_{u\in U} \max_{x\in X} F(x,u)\Bigr)\Bigl(\min_{v\in V} \max_{y\in Y} G(y,v)\Bigr),\\
\max_{x\in X, y\in Y} \min_{u\in U, v\in V} H(x,y,u,v)&\le \Bigl(\max_{x\in X} \min_{u\in U} F(x,u)\Bigr)\Bigl(\max_{y\in Y} \min_{v\in V} G(y,v)\Bigr).
\end{aligned}
\end{equation}
\end{enumerate}
\end{lemma}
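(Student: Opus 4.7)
The plan is to prove both parts of the lemma by the same two-step template: use compactness and continuity to assert that the various maxima and minima are attained, and then substitute judiciously chosen optimal points on the right-hand side of the product bound, finally invoking nonnegativity to factor a max or min of a product into a product of maxes or mins. I would present part (i) first as a warm-up and then adapt the same arguments to the nested setting of (ii), which is really just (i) carried out one level deeper.

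For the max inequality in (i), my plan is to observe that for every $(x,y)\in X\times Y$ one has $h(x,y)\le f(x)g(y)\le(\max_X f)(\max_Y g)$ by hypothesis and nonnegativity, and then take the supremum of the left-hand side. For the min inequality, I would choose minimizers $x^*\in X$ of $f$ and $y^*\in Y$ of $g$, whose existence is guaranteed by continuity on compact sets; the single value $h(x^*,y^*)\le f(x^*)g(y^*)=(\min_X f)(\min_Y g)$ already serves as an upper bound on $\min_{X\times Y} h$.

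For part (ii) I would use the same two ideas with one additional layer of nesting. For the first inequality, pick $u^*\in U$ achieving $\min_u\max_x F(x,u)$ and $v^*\in V$ achieving $\min_v\max_y G(y,v)$; these optimizers exist because $u\mapsto\max_x F(x,u)$ and $v\mapsto\max_y G(y,v)$ are continuous on compact sets. Then for every $(x,y)$, $H(x,y,u^*,v^*)\le F(x,u^*)G(y,v^*)\le (\max_x F(x,u^*))(\max_y G(y,v^*))$, and this already equals the desired product of the two minimax quantities; taking $\max_{x,y}$ on the left and noting that $u^*,v^*$ are admissible for the outer min on the original quantity finishes this case. For the second inequality, for each fixed $(x,y)$ I would choose minimizers $u^*(x)$ of $F(x,\cdot)$ and $v^*(y)$ of $G(y,\cdot)$; substitution gives $\min_{u,v}H(x,y,u,v)\le(\min_u F(x,u))(\min_v G(y,v))$, and taking $\max_{x,y}$ on both sides splits the right-hand side into $(\max_x\min_u F)(\max_y\min_v G)$ by nonnegativity.

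The only place where anything nontrivial has to be verified is the product-factorization $\max_{x,y}[\phi(x)\psi(y)]=(\max_x\phi)(\max_y\psi)$ (and its min analogue), which does require $\phi,\psi\ge 0$; this is the sole reason nonnegativity is built into the hypotheses. Apart from that, the argument is purely mechanical: existence of optimizers from compactness plus continuity, pointwise substitution, and the nonnegative product-factoring identity. I do not anticipate any genuine obstacle.
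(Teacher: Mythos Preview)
Your proof is correct and follows essentially the same approach as the paper's: both arguments use compactness to obtain optimizers, substitute them into the product bound, and rely on nonnegativity to factor the product. The only cosmetic difference is that the paper proves (ii) by invoking (i) twice (first the max half with $u,v$ fixed, then the min half in $u,v$), whereas you unroll this into explicit choices of $u^{*},v^{*}$; the mathematical content is identical.
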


\begin{proof}
(i) The first of the inequalities in~\eqref{E:2} follows from~\eqref{E:1} and the next obvious estimate:
\[
f(x)\, g(y)\le \Bigl(\max_{x\in X} f(x)\Bigr)\Bigl(\max_{y\in Y} g(y)\Bigr),\qquad \forall~x\in X,~y\in Y.
\]

To prove the second inequality in~\eqref{E:2}, denote by $x_{0}\in X$ and $y_{0}\in Y$ the points at which the minima of the functions $f(x)$ and $g(y)$,  respectively, are attained. Then, obviously,
\[
f(x_{0})\, g(y_{0})\le \Bigl(\min_{x\in X} f(x)\Bigr)\Bigl(\min_{y\in Y} g(y)\Bigr)
\]
In this case, by virtue of~\eqref{E:1}
\[
\min_{x\in X, y\in Y} h(x,y)\le h(x_{0},y_{0})\le f(x_{0})\, g(y_{0}),
\]
whence the second of the inequalities in~\eqref{E:2} follows. Assertion (i) is proved.

(ii) Let us prove the first of inequalities~\eqref{E:2ii}. Fix $u\in U$, $v\in V$, and calculate the maxima of the functions $F(x,u)$, $G(y,v)$ and $H(x,y,u,v)$ over all $x\in X$, $y\in Y$. By virtue of the already proved assertion~(i), we obtain that
\[
\max_{x\in X, y\in Y} H(x,y,u,v)\le \Bigl(\max_{x\in X} F(x,u)\Bigr)\Bigl(\max_{y\in Y} G(y,v)\Bigr).
\]
Taking now in this inequality the minimum of both sides in $u\in U$, $v\in V$, we, again by assertion~(i), obtain the first of inequalities~\eqref{E:2ii}.

The second of inequalities~\eqref{E:2ii} can be proved similarly.
\end{proof}

Now we pass to the proof of Lemma~\ref{L:MJSR-correctness}.

\begin{proof}[Proof of Lemma~\ref{L:MJSR-correctness}]
We recall that a nonnegative function of the natural argument $\nu(n)$ is said to be \emph{submultiplicative} if for all $m,n\ge0$ the inequality $\nu(m+n)\le\nu(m)\nu(n)$ holds. Let us show that the functions $\mu_{n}(\setA,\setB)$ and $\eta_{n}(\setA,\setB)$, defined by equalities~\eqref{E:defmnen}, are submultiplicative with respect to $n$. Fix integers $m,n\ge1$. Then for any collections of matrices $A_{1},\ldots,A_{m+n}\in\setA$ and $B_{1},\ldots,B_{m+n}\in\setB$ under the assumption of submultiplicativity of the matrix norm $\|\cdot\|$ the inequality
\begin{equation}\label{E:normsubmult}
\|A_{m+n}B_{m+n}\cdots A_{1}B_{1}\|
\le \|A_{m+n}B_{m+n}\cdots A_{n+1}B_{n+1}\|\cdot \|A_{n}B_{n}\cdots A_{1}B_{1}\|
\end{equation}
takes place. Introducing the variables
\begin{alignat*}{2}
x&=\{A_{1},\ldots,A_{n}\},\quad& y&=\{A_{n+1},\ldots,A_{n+m}\},\\
u&=\{B_{1},\ldots,B_{n}\},& v&=\{B_{n+1},\ldots,B_{n+m}\}
\end{alignat*}
and the functions
\begin{align*}
F(x,u)&=\|A_{n}B_{n}\cdots A_{1}B_{1}\|,\\
G(y,v)&=\|A_{n+m}B_{n+m}\cdots A_{n+1}B_{n+1}\|,\\ H(x,y,u,v)&=\|A_{m+n}B_{m+n}\cdots A_{1}B_{1}\|,
\end{align*}
from~\eqref{E:normsubmult} we get that the functions $F,G$ and $H$ satisfy condition~\eqref{E:1ii}. Then by virtue of assertion~(ii) of Lemma~\ref{L1}, the following inequalities hold:
\[
\mu_{m+n}(\setA,\setB)\le \mu_{m}(\setA,\setB) \mu_{n}(\setA,\setB),\quad
\eta_{m+n}(\setA,\setB)\le \eta_{m}(\setA,\setB) \eta_{n}(\setA,\setB),
\]
which mean that the functions $\mu_{n}(\setA,\setB)$ and $\eta_{n}(\setA,\setB)$ are submultiplicative with respect to $n$. In this case the existence of the limits in~\eqref{E:minmaxrad} and the validity of equalities~\eqref{E:MJSR-inf} follows from the well-known Fekete lemma~\cite{Fekete:MZ23}.

To prove the independence of the limits in~\eqref{E:minmaxrad} from the choice of the norm, we first note that any two norms $\|\cdot\|_{1}$ and $\|\cdot\|_{2}$ on the set of $(N\times N)$-matrices are equivalent to each other, i.e. for them there are constants $c,C>0 $ such that
\[
c\|X\|_{1}\le \|X\|_{2}\le C\|X\|_{1}
\]
for any $(N\times N)$-matrix $X$. In this case, for each matrix product $A_{n}B_{n}\cdots A_{1}B_{1}$ the following inequalities hold:
\[
c^{\frac{1}{n}}\|A_{n}B_{n}\cdots A_{1}B_{1}\|_{1}^{\frac{1}{n}}\le \|A_{n}B_{n}\cdots A_{1}B_{1}\|_{2}^{\frac{1}{n}}\le C^{\frac{1}{n}}\|A_{n}B_{n}\cdots A_{1}B_{1}\|_{1}^{\frac{1}{n}}.
\]
Here, obviously, $c^{\frac{1}{n}}, C^{\frac{1}{n}}\to 1$ as $n\to\infty$, from which it follows that the corresponding limits in~\eqref{E:minmaxrad}, computed for the norms $\|\cdot\|_{1}$ and $\|\cdot\|_{2}$, actually coincide. This completes the proof of the independence of the limits in~\eqref{E:minmaxrad} from the choice of the norm.
\end{proof}

\subsection{Proofs of Theorems~\ref{T:pointwizestab} and~\ref{T:strongstab}}\label{S:proofstab}

For the proof we need a number of auxiliary assertions.

\begin{lemma}[{see~\cite[Theorem~1]{Koz:Arxiv17-1}}]\label{L:pathstab}
Let $\setA$ and $\setB$ be finite sets of matrices of dimension $N\times{}M$ and $M\times{}N$, respectively, and $\|\cdot\|$ be a norm on the space of matrices of dimension $N\times{}N$. If the corresponding system $\systemAB$ is path-dependent stabilizable, then there exist constants $C>0$ and $\lambda\in(0,1)$ such that for any sequence of matrices $\{A_{n}\in\setA\}$ there is a sequence of matrices $\{B_{n}\in\setB\}$ for which
\[
\|A_{n}B_{n}\cdots A_{1}B_{1}\|\le C\lambda^{n},\qquad n=1,2,\ldots\,.
\]
\end{lemma}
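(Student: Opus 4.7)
The plan is to proceed in two stages. First, I would show that path-dependent stabilizability forces a uniform \emph{block-stopping bound}: an integer $n_0\ge 1$ such that for every sequence $\{A_n\}\in\setA^{\mathbb{N}}$ there exist a length $m\le n_0$ and matrices $B_1,\dots,B_m\in\setB$ with $\|A_mB_m\cdots A_1B_1\|<\tfrac{1}{2}$. Once such an $n_0$ is in hand, the uniform exponential bound of the lemma follows by iterating the stopping bound and invoking submultiplicativity of a fixed submultiplicative matrix norm (this can be assumed without loss of generality, since all norms on a finite-dimensional space are equivalent).

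The first stage, which is the main obstacle, will be carried out by a Dini-type argument on the compact product space $\setA^{\mathbb{N}}$, where the finiteness of both $\setA$ and $\setB$ is essential. For each $m\ge 1$ I would set
\[
F_m(\{A_n\}):=\min_{B_1,\dots,B_m\in\setB}\|A_mB_m\cdots A_1B_1\|.
\]
Since $F_m$ depends only on the first $m$ coordinates and is the minimum of finitely many continuous functions of those coordinates, $F_m$ is continuous on $\setA^{\mathbb{N}}$ in the product topology. The hypothesis yields, for each $\{A_n\}$, a sequence $\{B_n^*\}$ with $\|A_nB_n^*\cdots A_1B_1^*\|\to 0$; truncating this to length $m$ shows $F_m(\{A_n\})\le\|A_mB_m^*\cdots A_1B_1^*\|\to 0$, so $F_m\to 0$ pointwise on $\setA^{\mathbb{N}}$. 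The family $\{F_m\}$ is not itself monotone, but its running minimum $G_n:=\min_{1\le m\le n}F_m$ is a decreasing sequence of continuous functions on the compact space $\setA^{\mathbb{N}}$ converging pointwise to $0$. Dini's theorem then promotes this to uniform convergence, so for some $n_0$ we have $\sup_{\{A_n\}}G_{n_0}(\{A_n\})<\tfrac{1}{2}$, which is precisely the desired block-stopping bound.

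Finally, the uniform exponential bound follows by straightforward iteration. Given any $\{A_n\}$, I would repeatedly apply the stopping bound to successive tails: produce $m_1\le n_0$ and $B_1,\dots,B_{m_1}$ with $\|A_{m_1}B_{m_1}\cdots A_1B_1\|<\tfrac{1}{2}$; then apply it to the shifted tail $\{A_{m_1+k}\}_{k\ge 1}$ to produce $m_2\le n_0$ and the next block of $B$'s with block-product norm below $\tfrac{1}{2}$, and so on. Writing an arbitrary $n$ as $n=m_1+\cdots+m_k+r$ with $0\le r<n_0$, and setting $M:=\max_{A\in\setA,\,B\in\setB}\|AB\|$, submultiplicativity yields
\[
\|A_nB_n\cdots A_1B_1\|\le M^{r}\cdot 2^{-k}\le (2M^{n_0})\,\lambda^{n},\qquad \lambda=2^{-1/n_0}\in(0,1),
\]
since $k\ge n/n_0-1$; this is the asserted bound with $C=2M^{n_0}$. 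The delicate point throughout is the Dini step, where the quantifier-alternation implicit in the definition of path-dependent stabilizability is converted into a single bound uniform in the adversary's sequence; this is also why the monotone envelope $G_n$ has to be introduced in place of $F_m$ itself.
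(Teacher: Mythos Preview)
The paper does not itself prove this lemma; it is quoted verbatim from an external reference (\cite[Theorem~1]{Koz:Arxiv17-1}) and invoked as a black box in the proof of Theorem~\ref{T:pointwizestab}. There is therefore no in-paper argument to compare your proposal against.

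That said, your argument is sound. The compactness step is the right mechanism for converting the per-sequence convergence guaranteed by path-dependent stabilizability into a bound uniform over all adversary sequences $\{A_n\}$: finiteness of $\setA$ makes $\setA^{\mathbb{N}}$ compact in the product topology, each $F_m$ is continuous because it depends on only finitely many coordinates and is a minimum over the finite set $\setB^m$, and passing to the monotone envelope $G_n=\min_{m\le n}F_m$ is precisely what is needed to invoke Dini. The iteration stage is then routine. One cosmetic remark: the constant $C=2M^{n_0}$ tacitly assumes $M\ge 1$; if $M<1$ the conclusion is trivial anyway, so either note this or write $C=2\max(1,M)^{n_0}$.
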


\begin{lemma}[{see~\cite[Theorem~2]{Koz:Arxiv17-1}}]\label{L:unistab}
Let $\setA$ and $\setB$ be finite sets of matrices of dimension $N\times{}M$ and $M\times{}N$, respectively, and $\|\cdot\|$ be a norm on the space of matrices of dimension $N\times{}N$. Suppose that the corresponding system $\systemAB$ is path-independent periodically stabilizable and $\{\Bar{B}_{n}\in\setB\}$ is a sequence of matrices that realizes its universal periodic stabilization. Then there exist constants $C>0$ and $\lambda\in(0,1)$ such that the inequalities
\[
\|A_{n}\Bar{B}_{n}\cdots A_{1}\Bar{B}_{1}\|\le C\lambda^{n},\qquad n=1,2,\ldots\,,
\]
hold for any sequence of matrices $\{A_{n}\in\setA\}$.
\end{lemma}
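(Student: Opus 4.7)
The plan is to exploit periodicity of $\{\Bar{B}_{n}\}$ to reduce the lemma to the classical fact already invoked for Proposition~\ref{P:1}: for a finite set $\setC$ of square matrices, pointwise convergence $C_{m}\cdots C_{1}\to 0$ for every sequence $C_{i}\in\setC$ is equivalent to $\rho(\setC)<1$, and hence to uniform exponential decay.

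Let $p$ be the period of $\{\Bar{B}_{n}\}$, so that $\Bar{B}_{n+p}=\Bar{B}_{n}$. For an arbitrary sequence $\{A_{n}\in\setA\}$, group the factors of
\[
X_{n}=A_{n}\Bar{B}_{n}\cdots A_{1}\Bar{B}_{1}
\]
into blocks of length $p$. Each block has the form $A_{kp}\Bar{B}_{p}A_{kp-1}\Bar{B}_{p-1}\cdots A_{(k-1)p+1}\Bar{B}_{1}$, with the $\Bar{B}$-factors fixed and the $A$-factors ranging independently over $\setA$. As $(A_{(k-1)p+1},\ldots,A_{kp})$ varies over $\setA^{p}$, these block matrices form a finite set $\setC\subset\setM(N,N)$, and the partial products after an integer number of periods satisfy $X_{kp}=C_{k}C_{k-1}\cdots C_{1}$ for suitable $C_{i}\in\setC$.

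Next, the assumed path-independent periodic stabilizability gives exactly that every product $C_{m}\cdots C_{1}$ of matrices from $\setC$ tends to zero: any such product corresponds to some admissible perturbation sequence $\{A_{n}\in\setA\}$, and thus coincides with $X_{mp}$ for that sequence, which converges to $0$ by hypothesis. Applying the classical exponential-decay result for a finite matrix set (the same one behind Proposition~\ref{P:1}), we obtain $\rho(\setC)<1$, so there are constants $C_{1}>0$ and $\lambda_{1}\in(0,1)$ with $\|C_{m}\cdots C_{1}\|\le C_{1}\lambda_{1}^{m}$, uniformly in the choice of $C_{i}\in\setC$, hence uniformly in $\{A_{n}\in\setA\}$.

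Finally, for a general $n$, write $n=mp+r$ with $0\le r<p$. Then
\[
\|X_{n}\|\le\|A_{n}\Bar{B}_{n}\cdots A_{mp+1}\Bar{B}_{mp+1}\|\cdot\|X_{mp}\|,
\]
and the first factor is a product of at most $p$ matrices drawn from the finite (hence bounded) sets $\setA$ and $\{\Bar{B}_{1},\ldots,\Bar{B}_{p}\}$, so it is majorized by a constant $M$ depending only on $\setA$ and $\setB$. Setting $\lambda=\lambda_{1}^{1/p}\in(0,1)$ and $C=M\cdot C_{1}\lambda_{1}^{-1}$ yields $\|X_{n}\|\le C\lambda^{n}$ for all $n$ and all $\{A_{n}\in\setA\}$, as required. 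The only nontrivial step is the second one, where one must recognize that the periodic structure turns a path-by-path assertion about $X_{mp}\to0$ into a statement about \emph{all} products over the finite alphabet $\setC$; everything else is the Fekete-type bookkeeping of leftover factors.
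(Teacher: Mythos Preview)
The paper does not supply its own proof here; the lemma is quoted from \cite[Theorem~2]{Koz:Arxiv17-1}. Your argument is correct and is the natural one: the $p$-periodicity of $\{\Bar{B}_n\}$ lets you collapse the problem to the finite alphabet $\mathscr{C}$ of $p$-block products, path-independent stabilizability forces every infinite product over $\mathscr{C}$ to vanish, and the classical equivalence of this with $\rho(\mathscr{C})<1$ (the fact cited in the paragraph preceding Proposition~\ref{P:1}) delivers the uniform exponential bound. The only tacit assumption is submultiplicativity of $\|\cdot\|$ in the step $\|X_n\|\le\|A_{n}\Bar{B}_{n}\cdots A_{mp+1}\Bar{B}_{mp+1}\|\cdot\|X_{mp}\|$; since all norms on the space of $N\times N$ matrices are equivalent, this costs only an adjustment of the constant $C$ and is harmless.
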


\begin{lemma}\label{L:mJSRlessone}
Let $\setA$ and $\setB$ be finite sets of matrices of dimension $N\times{}M$ and $M\times{}N$, respectively, and $\|\cdot\|$ be a norm on the space of matrices of dimension $N\times{}N$. Then the following assertions hold.
\begin{enumerate}[\rm(i)]
\item If $\mu(\setA,\setB)<1$, then there exist a constant $\sigma\in(0,1)$ and a positive integer $k$ such that
    \begin{equation}\label{E:mucrit}
     \forall~A_{1},\ldots,A_{k}\in\setA\quad \exists~ B_{1},\ldots,B_{k}\in\setB:\quad
     \|A_{k}B_{k}\cdots A_{1}B_{1}\|\le\sigma.
    \end{equation}
    If there exist a constant $\sigma\in(0,1)$ and a positive integer $k$ for which condition~\eqref{E:mucrit} holds in some submultiplicative matrix norm $\|\cdot\|$, then $\mu(\setA,\setB)<1$.
\item If $\eta(\setA,\setB)<1$, then there exist a constant $\sigma\in(0,1)$ and a positive integer $k$ such that
    \begin{equation}\label{E:etacrit}
     \exists~ \Bar{B}_{1},\ldots,\Bar{B}_{k}\in\setB:\quad
     \|A_{k}\Bar{B}_{k}\cdots A_{1}\Bar{B}_{1}\|\le\sigma\quad \forall~A_{1},\ldots,A_{k}\in\setA.
    \end{equation}
    If there exist a constant $\sigma\in(0,1)$ and a positive integer $k$ for which condition~\eqref{E:etacrit} holds in some submultiplicative matrix norm $\|\cdot\|$, then $\eta(\setA,\setB)<1$.
\end{enumerate}
\end{lemma}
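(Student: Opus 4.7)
The plan is to reduce both (i) and (ii) to the purely numeric equivalences $\mu(\setA,\setB)<1 \Leftrightarrow \mu_k(\setA,\setB)<1$ for some $k$, and $\eta(\setA,\setB)<1 \Leftrightarrow \eta_k(\setA,\setB)<1$ for some $k$, and then use finiteness of $\setA$ and $\setB$ to convert these finite-horizon inequalities into the combinatorial certificates~\eqref{E:mucrit} and~\eqref{E:etacrit}. All the analytic content (existence of limits, norm-independence, and the infimum formula~\eqref{E:MJSR-inf}) is already supplied by Lemma~\ref{L:MJSR-correctness}, so the argument is essentially a two-line translation in each direction.

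For the forward direction of (i), the definition~\eqref{E:minmaxrad} together with Lemma~\ref{L:MJSR-correctness} gives $\mu_n(\setA,\setB)^{1/n}\to\mu(\setA,\setB)<1$ in the given norm $\|\cdot\|$. I would therefore pick $k$ large enough that $\mu_k(\setA,\setB)<1$ and set, say, $\sigma=\tfrac{1}{2}(1+\mu_k(\setA,\setB))\in(0,1)$, so that $\sigma\ge\mu_k(\setA,\setB)$. Because $\setA$ is finite, the outer maximum in $\mu_k(\setA,\setB)=\max_{A_i\in\setA}\min_{B_i\in\setB}\|A_kB_k\cdots A_1B_1\|$ is attained, and because $\setB$ is finite the inner minimum is attained for each choice of the $A_i$; reading off the definition then yields exactly~\eqref{E:mucrit}.

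For the reverse direction of (i), I would invoke the second half of Lemma~\ref{L:MJSR-correctness}: when $\|\cdot\|$ is submultiplicative, $\mu(\setA,\setB)=\inf_{n\ge 0}\mu_n(\setA,\setB)^{1/n}$. Condition~\eqref{E:mucrit} implies $\mu_k(\setA,\setB)\le\sigma$, hence $\mu(\setA,\setB)\le\mu_k(\setA,\setB)^{1/k}\le\sigma^{1/k}<1$. Part~(ii) is handled by the identical two-step argument applied to $\eta_n(\setA,\setB)=\min_{B_i\in\setB}\max_{A_i\in\setA}\|\cdot\|$: in the forward direction one chooses $k$ with $\eta_k(\setA,\setB)<\sigma<1$, and the finiteness of $\setB$ ensures the outer minimum is attained at some tuple $(\Bar{B}_1,\ldots,\Bar{B}_k)\in\setB^k$, for which the (attained) inner maximum over $(A_1,\ldots,A_k)\in\setA^k$ is at most $\sigma$, giving~\eqref{E:etacrit}; the reverse direction uses the analogous infimum formula $\eta(\setA,\setB)=\inf_{n\ge 0}\eta_n(\setA,\setB)^{1/n}$.

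There is essentially no main obstacle here: Lemma~\ref{L:MJSR-correctness} already performs all of the analytic work, and finiteness of $\setA$ and $\setB$ is exactly what guarantees that the relevant $\max$/$\min$ operations are attained, so one can extract concrete tuples of matrices. The only small point requiring care is ensuring $\sigma>0$ when $\mu_k(\setA,\setB)$ or $\eta_k(\setA,\setB)$ happens to vanish, which is avoided by choosing $\sigma$ strictly inside the open interval $(\mu_k(\setA,\setB),1)$ (respectively $(\eta_k(\setA,\setB),1)$) rather than taking $\sigma$ equal to the endpoint.
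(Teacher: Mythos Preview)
Your proposal is correct and follows essentially the same approach as the paper's own proof: both directions of each part are reduced to the equivalence $\mu_k(\setA,\setB)\le\sigma<1$ (resp.\ $\eta_k(\setA,\setB)\le\sigma<1$) for some $k$, with the forward direction coming from the limit definition~\eqref{E:minmaxrad} and the reverse from the infimum formula~\eqref{E:MJSR-inf} of Lemma~\ref{L:MJSR-correctness}. Your version is slightly more explicit about the choice of $\sigma$ and about finiteness guaranteeing attainment of the extrema, but these are minor expository differences rather than a different argument.
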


\begin{proof}
If $\mu(\setA,\setB)<1$, then by the definition~\eqref{E:minmaxrad} there can be found a constant $\sigma\in(0,1)$ and a positive integer $k$ such that $\mu_{k}(\setA,\setB)\le\sigma$. The last condition, by the definition~\eqref{E:defmnen} of the value $\mu_{k}(\setA,\setB)$, is just written as~\eqref{E:mucrit}.

Now let $\|\cdot\|$ be a submultiplicative matrix norm for which condition~\eqref{E:mucrit} holds for some $\sigma\in(0,1)$ and natural $k$. Then for any collection of matrices $A_{1},\ldots,A_{k}\in\setA$ there is a collection of matrices $B_{1},\ldots,B_{k}\in\setB$ such that $\|A_{k}B_{k}\cdots A_{1}B_{1}\|\le\sigma$. Hence, for any set of matrices $A_{1},\ldots,A_{k}\in\setA$, the estimate
\[
\min_{B_{i}\in\setB} \|A_{k}B_{k}\cdots A_{1}B_{1}\|\le\sigma
\]
holds. Therefore,
\[
\mu_{k}(\setA,\setB)=\max_{A_{i}\in\setA}\min_{B_{i}\in\setB} \|A_{k}B_{k}\cdots A_{1}B_{1}\|\le\sigma
\]
and, by virtue of relations~\eqref{E:MJSR-inf}, from Lemma~\ref{L:MJSR-correctness} we obtain that  $\mu(\setA,\setB)\le\sigma^{\frac{1}{k}}<1$. Assertion~(i) is proved.

The proof of assertion~(ii) is similar.
\end{proof}

\begin{proof}[Proof of Theorem~\ref{T:pointwizestab}]
Suppose that the system $\systemAB$ is path-dependent stabilizable in the class of all perturbations $\setA$ and controls $\setB$. Fix some number $\sigma\in(0,1)$. For it, by Lemma~\ref{L:pathstab}, a natural $k$ can be found such that for any collection of matrices $A_{1},\ldots,A_{k}\in\setA$ there is a collection of matrices  $B_{1},\ldots,B_{k}\in\setB$ satisfying
\[
\|A_{k}B_{k}\cdots A_{1}B_{1}\|\le \sigma<1.
\]
Hence, by assertion~(i) of Lemma~\ref{L:mJSRlessone}, we obtain that $\mu(\setA,\setB)<1$.

Now let $\mu(\setA,\setB)<1$. Then by assertion~(i) of Lemma~\ref{L:mJSRlessone} there is a constant $\sigma\in(0,1)$ and a natural $k$ such that condition~\eqref{E:mucrit} is satisfied. We will show that the fulfillment of this condition implies the path-dependent stabilizability of the system $\systemAB$ in the class of all perturbations $\setA$ and controls $\setB$.

Given an arbitrary sequence $\{A_{n}\in\setA\}$, by condition~\eqref{E:mucrit} there exists for it a collection of matrices $B_{1},\ldots,B_{k}$ such that
\[
\|A_{k}B_{k}\cdots A_{1}B_{1}\|\le\sigma.
\]

Next, consider the sequence of matrices $\{A_{n}\in\setA,~n\ge k+1\}$ (the `tail' of the sequence $\{A_{n}\in\setA\}$ starting with index $k+1$). Again, because of condition~\eqref{E:mucrit}, there is a collection of matrices $B_{k+1},\ldots,B_{2k}$ such that
\[
\|A_{2k}B_{2k}\cdots A_{k+1}B_{k+1}\|\le\sigma.
\]

We continue in the same way to construct for each $m=3,4,\ldots$ the collections of matrices $B_{k(m-1)+1},\ldots,B_{km}$ for which
\[
\|A_{km}B_{km}\cdots A_{k(m-1)+1}B_{k(m-1)+1}\|\le\sigma.
\]

It is easy to see that the constructed sequence of matrices $\{B_{n}\}$ satisfies the inequalities
\[
\|A_{km}B_{km}\cdots A_{1}B_{1}\|\le\sigma^{m},\qquad m=1,2,\ldots\,,
\]
whence by the boundedness of the norms of all matrices $AB$, where $A\in\setA$ and $B\in\setB$ (recall that the sets of matrices $\setA$ and $\setB$ are finite), the matrix products $A_{n}B_{n}\cdots A_{1}B_{1}$ converge to zero. The theorem is proved.
\end{proof}

The proof of Theorem~\ref{T:strongstab} almost literally repeats the above proof of Theorem~\ref{T:pointwizestab}. Nevertheless, for the sake of completeness, we give it, too.

\begin{proof}[Proof of Theorem~\ref{T:strongstab}]
Suppose that the system $\systemAB$ is path-independent periodically stabilizable in the class of all perturbations $\setA$. Fix some number $\sigma\in(0,1)$. For it, by Lemma~\ref{L:unistab}, we can find a natural $k$ and a sequence of matrices $\{\Bar{B}_{n}\in\setB\}$ such that for any collection of matrices $A_{1},\ldots,A_{k}\in\setA$ the inequalities
\[
\|A_{k}\Bar{B}_{k}\cdots A_{1}\Bar{B}_{1}\|\le\sigma<1
\]
hold. From here, by assertion~(ii) of Lemma~\ref{L:mJSRlessone}, we obtain that $\eta(\setA,\setB)<1$.

Now let $\eta(\setA,\setB)<1$. Then by assertion~(ii) of Lemma~\ref{L:mJSRlessone} there exists a constant $\sigma\in(0,1)$, a natural $k$ and a collection of matrices $\Bar{B}_{1},\ldots,\Bar{B}_{k}\in\setA$, for which condition~\eqref{E:etacrit} is satisfied. Extend the collection of matrices $\Bar{B}_{1},\ldots,\Bar{B}_{k}\in\setA$ by periodicity to the infinite $k$-periodic sequence $\{\Bar{B}_{n}\in\setB\}$. We show that in this case the system $\systemAB$ will be path-independent periodically stabilizable by the sequence $\{\Bar{B}_{n}\in\setB\}$ in the class of all perturbations $\setA$.

Take an arbitrary sequence $\{A_{n}\in\setA\}$. Then, for each $m=1,2,\ldots$\,, by virtue of condition~\eqref{E:etacrit} and $k$-periodicity of the sequence $\{\Bar{B}_{n}\in\setB\}$ the relations
\[
\|A_{km}\Bar{B}_{km}\cdots A_{k(m-1)+1}\Bar{B}_{k(m-1)+1}\|=\|A_{km}\Bar{B}_{k}\cdots A_{k(m-1)+1}\Bar{B}_{1}\|\le\sigma
\]
will take place. Consequently,
\[
\|A_{km}\Bar{B}_{km}\cdots A_{1}\Bar{B}_{1}\|\le\sigma^{m},\qquad m=1,2,\ldots\,,
\]
whence by the boundedness of the norms of all matrices $AB$, where  $A\in\setA$ and $B\in\setB$  (recall that the sets of matrices $\setA$ and $\setB$  are finite), the matrix products $A_{n}\Bar{B}_{n}\cdots A_{1}\Bar{B}_{1}$ are convergent to zero.

The theorem is proved.
\end{proof}

\subsection{Proof of Theorem~\ref{T:Hsets}}\label{S:proofTHsets}

In the following discussions, the minimax equality
\begin{equation}\label{E-minimax}
\min_{B\in\setB}\max_{A\in\setA}\rho(AB)=
\max_{A\in\setA}\min_{B\in\setB}\rho(AB),
\end{equation}
which is valid for any compact sets of matrices $\setA\in\overline{\setH}(N,M)$ and $\setB\in\overline{\setH}(M,N)$, plays the key role. This equality is obtained from the following equality proved in~\cite[Theorem~3.3]{Koz:LMA17}:
\[
\min_{A\in\setA}\max_{B\in\setB}\rho(AB)=
\max_{B\in\setB}\min_{A\in\setA}\rho(AB),
\]
if to interchange the variables $A$  and $B$ (and the sets $\setA$ and $\setB$, respectively) in the latter and notice that for any matrices $A$ and  $B$ (rectangular, in the general case) the equality $\rho(AB)=\rho(BA)$ holds.

In the theory of functions, one of the fundamental criteria for the feasibility of a minimax equality is the so-called \emph{saddle point principle}, see~\cite[Section~13.4]{von1947theory}, which, in relation to the situation we are considering, states that the minimax equality~\eqref{E-minimax} is satisfied if and only if there are matrices $\tilde{A}\in\setA$ and $\tilde{B}\in\setB$ such that
\begin{equation}\label{E:saddleAB}
\rho(A\tilde{B})\le\rho(\tilde{A}\tilde{B})\le \rho(\tilde{A}B),
\end{equation}
for all $A\in\setA$ and $B\in\setB$.

Also, an important property of $\setH$-sets of matrices is that for them the joint and lower spectral radii can be calculated constructively. In particular, as shown in~\cite[Theorem~3]{Koz:LAA16}, for any compact set of matrices $\setA\in\overline{\setH}(N,N)$ the equalities
\begin{equation}\label{E:exact0}
\max_{A_{i}\in\setA} \rho{(A_{n}\cdots A_{1})}^{1/n}= \max_{A\in\setA}\rho(A),\qquad\min_{A_{i}\in\setA} \rho{(A_{n}\cdots A_{1})}^{1/n}= \min_{A\in\setA}\rho(A)
\end{equation}
hold  for each $n\ge 1$, from which it follows by virtue of~\eqref{E-LSR} and~\eqref{E-GSR} that
\begin{equation}\label{E-exact}
\rho(\setA)=\max_{A\in\setA}\rho(A),\qquad \check{\rho}(\setA)=\min_{A\in\setA}\rho(A).
\end{equation}

The following lemma, which is a natural generalization of Theorem~3 from~\cite{Koz:LAA16}, shows that for $\setH$-sets of matrices, not only the joint and lower spectral radii can be constructively calculated, but also the minimax quantities $\Hat{\mu}(\setA,\setB)$, $\Hat{\eta}(\setA,\setB)$, $\Check{\mu}(\setA,\setB)$ and $\Check{\eta}(\setA,\setB)$.

\begin{lemma}\label{L:Hsetminimax}
Let $\setA$, $\setB$ be compact $\setH$-sets of positive matrices of dimension  $M\times N$ and $N\times M$, respectively. Then
\begin{equation}\label{E:bmneqben}
\Bar{\mu}_{n}(\setA,\setB)=\Bar{\eta}_{n}(\setA,\setB) =\rho(\tilde{A}\tilde{B})^{n},\quad n\ge 1,
\end{equation}
where the quantities $\Bar{\mu}_{n}(\setA,\setB)$ and $\Bar{\eta}_{n}(\setA,\setB)$ are defined by equalities~\eqref{E:defbmbe}, and therefore, by virtue of the definitions~\eqref{E:minmaxhigh} and~\eqref{E:minmaxlow},
\[
\Hat{\mu}(\setA,\setB)=\Check{\mu}(\setA,\setB) =\Hat{\eta}(\setA,\setB)=\Check{\eta}(\setA,\setB)= \rho(\tilde{A}\tilde{B}).
\]
\end{lemma}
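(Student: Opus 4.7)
The plan is to reduce the mixed products $A_n B_n \cdots A_1 B_1$ to ordinary products of $M\times M$ matrices drawn from two auxiliary $\setH$-sets obtained by freezing one factor at a saddle point of $\rho(AB)$, and then to invoke the explicit formulas~\eqref{E:exact0} for the extremal quantities on $\setH$-sets. The first step is to apply the minimax equality~\eqref{E-minimax} together with the saddle point principle recalled just above; these produce matrices $\tilde A\in\setA$ and $\tilde B\in\setB$ satisfying $\rho(A\tilde B)\le\rho(\tilde A\tilde B)\le\rho(\tilde A B)$ for every $A\in\setA$ and $B\in\setB$, and the value $\rho(\tilde A\tilde B)$ will be the candidate common value in~\eqref{E:bmneqben}.

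The technical core is to show that both
\[
\setA\tilde B:=\{A\tilde B:A\in\setA\}\subset\setM(M,M)\quad\text{and}\quad \tilde A\setB:=\{\tilde A B:B\in\setB\}\subset\setM(M,M)
\]
are themselves compact $\setH$-sets of positive matrices. Compactness and positivity are immediate. For $\setA\tilde B$, given a reference element $A_0\tilde B$ and a positive vector $u\in\mathbb{R}^M$, I would set $v=\tilde B u$, which is positive in $\mathbb{R}^N$ because $\tilde B$ is a positive matrix, and observe that $A\tilde B u=Av$ and $A_0\tilde B u=A_0 v$; both alternatives of the $\setH$-condition for the pair $(A_0\tilde B,u)$ are then read straight off the $\setH$-condition for $\setA$ applied to $(A_0,v)$. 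For $\tilde A\setB$, given $\tilde AB_0$ and positive $u\in\mathbb{R}^M$, I would apply the $\setH$-condition for $\setB$ to $(B_0,u)$ and then left-multiply the resulting vector inequalities by $\tilde A$; the key point is that strict positivity of $\tilde A$ sends any nonzero nonnegative vector to an entrywise positive vector, which preserves the strictness $\tilde A\bar Bu\neq\tilde AB_0 u$ required in the nontrivial branch of each alternative.

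With $\setA\tilde B$ and $\tilde A\setB$ identified as $\setH$-sets of square matrices, formulas~\eqref{E:exact0} yield
\[
\max_{A_i\in\setA}\rho(A_n\tilde B\cdots A_1\tilde B)=\Bigl(\max_{A\in\setA}\rho(A\tilde B)\Bigr)^n\le\rho(\tilde A\tilde B)^n,
\]
\[
\min_{B_i\in\setB}\rho(\tilde A B_n\cdots\tilde A B_1)=\Bigl(\min_{B\in\setB}\rho(\tilde A B)\Bigr)^n\ge\rho(\tilde A\tilde B)^n,
\]
where the outer inequalities are exactly the saddle-point estimates~\eqref{E:saddleAB}. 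Plugging the constant choice $B_i\equiv\tilde B$ into the definition~\eqref{E:defbmbe} of $\Bar\eta_n$ yields $\Bar\eta_n\le\rho(\tilde A\tilde B)^n$, and plugging $A_i\equiv\tilde A$ into the definition of $\Bar\mu_n$ yields $\Bar\mu_n\ge\rho(\tilde A\tilde B)^n$; combined with the universal maximin--minimax inequality $\Bar\mu_n\le\Bar\eta_n$, this forces~\eqref{E:bmneqben}. The values of $\Hat\mu$, $\Hat\eta$, $\Check\mu$ and $\Check\eta$ then follow by taking $n$-th roots and the appropriate limits in~\eqref{E:minmaxhigh} and~\eqref{E:minmaxlow}. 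The main obstacle in the whole argument is the middle step, the inheritance of the $\setH$-set property under one-sided multiplication by a fixed positive matrix; it requires carefully transporting both vector alternatives across the substitution $u\mapsto\tilde B u$ on the right and across left multiplication by $\tilde A$, using strict positivity to keep the strict inequalities strict.
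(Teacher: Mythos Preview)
Your proposal is correct and follows essentially the same route as the paper: obtain saddle-point matrices $\tilde A,\tilde B$ from~\eqref{E-minimax}, reduce to the one-parameter families $\setA\tilde B$ and $\tilde A\setB$, apply~\eqref{E:exact0} to those, and then squeeze with the saddle inequalities~\eqref{E:saddleAB} together with $\Bar\mu_n\le\Bar\eta_n$. The only difference is cosmetic: where the paper invokes \cite[Theorem~2]{Koz:LAA16} to conclude that $\setA\tilde B$ and $\tilde A\setB$ are $\setH$-sets, you verify this inheritance directly via the substitution $u\mapsto\tilde B u$ and left multiplication by $\tilde A$---a self-contained argument that is perfectly fine and avoids the external citation.
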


\begin{proof}
We will use the idea of a proof of Theorem~13 from~\cite{ACDDHK:STACS15}, which is close in meaning. Fix $n\ge 1$. By the definition~\eqref{E:defbmbe} of the quantity $\Bar{\mu}_{n}(\setA,\setB)$ we have
\begin{equation}\label{E-etanestim}
\begin{aligned}
\Bar{\mu}_{n}(\setA,\setB)&=\max_{A_{i}\in\setA} \min_{B_{i}\in\setB}\rho(A_{n}B_{n}\cdots A_{1}B_{1})\\
&\ge
\min_{B_{i}\in\setB}\rho(\tilde{A}B_{n}\cdots \tilde{A}B_{1})=\min_{\tilde{B}_{i}\in\tilde{A}\setB}\rho(\tilde{B}_{n}\cdots \tilde{B}_{1}),
\end{aligned}
\end{equation}
where  $\tilde{A}\setB:=\{\tilde{B}:~\tilde{B}=\tilde{A}B,~B\in\setB\}$.

We now note that, as follows from~\cite[Theorem~2]{Koz:LAA16}, the set of matrices $\tilde{A}\setB$ is an $\setH$-set, since the one-element set of matrices $\{\tilde{A}\}$ and the set of matrices $\setB$ are both $\setH$-sets. Therefore, by the second of equalities~\eqref{E:exact0} and the definition~\eqref{E:saddleAB} of matrices  $\tilde{A}$ and $\tilde{B}$,
\[
\min_{\tilde{B}_{i}\in\tilde{A}\setB}\rho(\tilde{B}_{n}\cdots \tilde{B}_{1})=\min_{\tilde{B}\in\tilde{A}\setB} \rho(\tilde{B})^{n}=\min_{B\in\setB} \rho(\tilde{A}B)^{n}\ge\rho(\tilde{A}\tilde{B})^{n},
\]
from which by virtue of~\eqref{E-etanestim}
\begin{equation}\label{E:etanAB}
\Bar{\mu}_{n}(\setA,\setB)\ge\rho(\tilde{A}\tilde{B})^{n}.
\end{equation}

Similarly, we estimate the value of $\Bar{\eta}_{n}(\setA,\setB)$. To this end, using the definition~\eqref{E:defbmbe}, we write the chain of relations
\begin{equation}\label{E-munestim}
\begin{aligned}
\Bar{\eta}_{n}(\setA,\setB)&= \min_{B_{i}\in\setB}\max_{A_{i}\in\setA} \rho(A_{n}B_{n}\cdots A_{1}B_{1})\\
&\le
\max_{A_{i}\in\setA}\rho(A_{n}\tilde{B}\cdots A_{1}\tilde{B})= \max_{\tilde{A}_{i}\in\setA\tilde{B}} \rho(\tilde{A}_{n}\cdots \tilde{A}_{1}),
\end{aligned}
\end{equation}
where $\setA\tilde{B}:=\{\tilde{A}:~\tilde{A}=A\tilde{B},~A\in\setA\}$.

Again, by virtue of~\cite[Theorem~2]{Koz:LAA16}, the set of matrices $\setA\tilde{B}$ is an $\setH$-set, since the one-element set of matrices $\{\tilde{B}\}$ and the set of matrices $\setA$ are both $\setH$-sets. Therefore, by the first of equalities~\eqref{E:exact0} and the definition~\eqref{E:saddleAB} of matrices~$\tilde{A}$ and~$\tilde{B}$,
\[
\max_{\tilde{A}_{i}\in\setA\tilde{B}} \rho(\tilde{A}_{n}\cdots \tilde{A}_{1})=\max_{\tilde{A}\in\setA\tilde{B}} \rho(\tilde{A})^{n}=\max_{A\in\setA} \rho(A\tilde{B})^{n}\le\rho(\tilde{A}\tilde{B})^{n},
\]
from which by virtue of~\eqref{E-munestim}
\begin{equation}\label{E:munAB}
\Bar{\eta}_{n}(\setA,\setB)\le\rho(\tilde{A}\tilde{B})^{n}.
\end{equation}

Comparing inequalities~\eqref{E:etanAB} and~\eqref{E:munAB}, we obtain that $\Bar{\mu}_{n}(\setA,\setB)\ge\rho(\tilde{A}\tilde{B})^{n}\ge\Bar{\eta}_{n}(\setA,\setB)$. On the other hand, since the maximin of a function does not exceed its minimax, we find from the definition~\eqref{E:defbmbe} that $\Bar{\mu}_{n}(\setA,\setB)\le\Bar{\eta}_{n}(\setA,\setB)$, which implies equality~\eqref{E:bmneqben}. The lemma is proved.
\end{proof}

Now we can proceed to the proof of Theorem~\ref{T:Hsets}.

\begin{proof}[Proof of Theorem~\ref{T:Hsets}]

The equality of the values $\Hat{\mu}(\setA,\setB)$, $\Check{\mu}(\setA,\setB)$, $\Hat{\eta}(\setA,\setB)$ and $\Check{\eta}(\setA,\setB)$ (and their equality to $\rho(\tilde{A}\tilde{B})$) is established in Lemma~\ref{L:Hsetminimax}. Therefore, it remains to prove
\begin{equation}\label{E:mueqeta}
\mu(\setA,\setB)=\eta(\setA,\setB)=\rho(\tilde{A}\tilde{B}).
\end{equation}
Unfortunately, as follows from Example~\ref{Ex:2}, in the general case $\mu_{n}(\setA,\setB)\neq\eta_{n}(\setA,\setB)$, and therefore we can not directly use the scheme of proving Lemma~\ref{L:Hsetminimax} to prove equalities~\eqref{E:mueqeta}. Nevertheless, the idea of proving Lemma~\ref{L:Hsetminimax} still turns out to be workable after small changes.

Let $\|\cdot\|$ be the matrix norm on the space of matrices of dimension $N\times{}N$ generated by some vector norm $\|\cdot\|$ on $\mathbb{R}^{N}$. In this case, by the definition~\eqref{E:minmaxrad} of the quantity $\mu(\setA,\setB)$, we have
\begin{equation}\label{E-etaestim}
\begin{aligned}
\mu(\setA,\setB)&=\lim_{n\to\infty}\max_{A_{i}\in\setA} \min_{B_{i}\in\setB}\|A_{n}B_{n}\cdots A_{1}B_{1}\|^{\frac{1}{n}}
\\
&\ge
\lim_{n\to\infty}\min_{B_{i}\in\setB}\|\tilde{A}B_{n}\cdots \tilde{A}B_{1}\|^{\frac{1}{n}}=\lim_{n\to\infty}\min_{\tilde{B}_{i}\in\tilde{A}\setB}\|\tilde{B}_{n}\cdots \tilde{B}_{1}\|^{\frac{1}{n}}\\
&=\Check{\rho}(\tilde{A}\setB),
\end{aligned}
\end{equation}
where $\tilde{A}\setB:=\{\tilde{B}:~\tilde{B}=\tilde{A}B,~B\in\setB\}$.

As was mentioned in the proof of Lemma~\ref{L:Hsetminimax}, the set of matrices $\tilde{A}\setB$ is an $\setH$-set. Therefore, by the second of equalities~\eqref{E-exact} and inequalities~\eqref{E:saddleAB},
\[
\Check{\rho}(\tilde{A}\setB)=\min_{\tilde{B}\in\tilde{A}\setB} \rho(\tilde{B})=\min_{B\in\setB} \rho(\tilde{A}B)\ge\rho(\tilde{A}\tilde{B}),
\]
from which by virtue of~\eqref{E-etaestim}
\begin{equation}\label{E:etaAB}
\mu(\setA,\setB)\ge\rho(\tilde{A}\tilde{B}).
\end{equation}

Similarly, we estimate the value of $\eta(\setA,\setB)$. To this end, using the definition~\eqref{E:minmaxrad}, we write the chain of relations
\begin{equation}\label{E-muestim}
\begin{aligned}
\eta(\setA,\setB)&=\lim_{n\to\infty} \min_{B_{i}\in\setB}\max_{A_{i}\in\setA} \|A_{n}B_{n}\cdots A_{1}B_{1}\|^{\frac{1}{n}}\\
&\le
\lim_{n\to\infty} \max_{A_{i}\in\setA} \|A_{n}\tilde{B}\cdots A_{1}\tilde{B}\|^{\frac{1}{n}}=\lim_{n\to\infty} \max_{\tilde{A}_{i}\in\setA\tilde{B}} \|\tilde{A}_{n}\cdots \tilde{A}_{1}\|^{\frac{1}{n}}\\
&=\rho(\setA\tilde{B}),
\end{aligned}
\end{equation}
where $\setA\tilde{B}:=\{\tilde{A}:~\tilde{A}=A\tilde{B},~A\in\setA\}$.

Here again, as was mentioned in the proof of Lemma~\ref{L:Hsetminimax}, the set of matrices $\setA\tilde{B}$ is an $\setH$-set. Therefore, by the first of equalities~\eqref{E-exact} and inequalities~\eqref{E:saddleAB}
\[
\rho(\setA\tilde{B})=\max_{\tilde{A}\in\setA\tilde{B}} \rho(\tilde{A})=\max_{A\in\setA} \rho(A\tilde{B})\le\rho(\tilde{A}\tilde{B}),
\]
from which by virtue of~\eqref{E-muestim}
\begin{equation}\label{E:muAB}
\eta(\setA,\setB)\le\rho(\tilde{A}\tilde{B}).
\end{equation}

Comparing inequalitie~\eqref{E:etaAB} and~\eqref{E:muAB}, we get that $\mu(\setA,\setB)\ge\rho(\tilde{A}\tilde{B})\ge\eta(\setA,\setB)$. But on the other hand, since the minimax of a function does not exceed its maximin, from the definitions~\eqref{E:minmaxrad} we get that $\mu(\setA,\setB)\le\eta(\setA,\setB)$, from which equality~\eqref{E:mueqeta} follows.

The theorem is proved.
\end{proof}

\begin{remark}
In Theorem~\ref{T:Hsets}, instead of the sets $\setA\in\setH(N,M)$ and $\setB\in\setH(M,N)$, one can take the sets $\Tilde{\setA}$ and $\Tilde{\setB}$ satisfying inclusions
\begin{equation}\label{E:ABco}
\setA\subseteq\Tilde{\setA}\subseteq\co(\setA),\quad \setB\subseteq\Tilde{\setB}\subseteq\co(\setB),
\end{equation}
where $\setA\in\overline{\setH}(N,M)$ and $\setB\in\overline{\setH}(M,N)$, and the symbol $\co(\cdot)$ denotes the convex hull of a set. The validity of this remark follows from the fact that all the statements used in the proof of Theorem~\ref{T:Hsets} are proved in~\cite{Koz:LMA17} namely for the sets $\Tilde{\setA}$ and $\Tilde{\setB}$ satisfying the inclusions~\eqref{E:ABco}.
\end{remark}

\section*{Acknowledgments} The work was carried out at the Kotel'nikov Institute of Radio-engineering and Electronics, Russian Academy
of Sciences, and was supported by the Russian Science Foundation, Project number 16-11-00063.


\bibliographystyle{elsarticle-num-nourl}
\bibliography{MMJSR}

\end{document}